\theoremstyle{plain}
\newtheorem{theorem}{Theorem}[section]
\newtheorem{corollary}[theorem]{Corollary}
\newtheorem{lemma}[theorem]{Lemma}
\newtheorem{proposition}[theorem]{Proposition}
\theoremstyle{definition}
\newtheorem*{problem*}{Problem}
\newtheorem*{theorem*}{Theorem}
\newtheorem{remark}[theorem]{Remark}
\def\ker#1{\mathrm{ker}(#1)}
\def\aut#1{\mathrm{Aut}(#1)}
\def\setof#1#2{\{#1\, : \,#2\}}
\newcommand{\Aut}{\operatorname{Aut}}
\newcommand{\Hol}{\operatorname{Hol}}
\def\cg#1{\equiv_\alpha}
\newcommand*\xbar[1]{%
   \hbox{%
     \vbox{%
       \hrule height 0.5pt 
       \kern0.5ex
       \hbox{%
         \kern-0.1em
         \ensuremath{#1}%
         \kern-0.1em
       }%
     }%
   }%
} 
\keywords{Yang-Baxter equation, set-theoretic solution, skew brace, Hopf--Galois}
\title{Skew Braces of size $pq$}
\begin{document}



%

\begin{abstract}
    We construct all skew braces of size $pq$ (where $p>q$ are primes) by using Byott's classification of Hopf--Galois extensions of the same degree. For $p\not\equiv 1 \pmod{q}$ there exists only one skew brace which is the trivial one. When $p\equiv 1 \pmod{q}$, we have $2q+2$ skew braces, two of which are of cyclic type (so, contained in Rump's classification) and $2q$ of non-abelian type.
\end{abstract}

\author{E. Acri}
\author{M. Bonatto}

\address[E. Acri, M. Bonatto]{IMAS--CONICET and Universidad de Buenos Aires, 
Pabell\'on~1, Ciudad Universitaria, 1428, Buenos Aires, Argentina}
\email{eacri@dm.uba.ar}

\email{marco.bonatto.87@gmail.com}

\maketitle

\section*{Introduction}

In the last few decades, there was an increasing interest in studying solutions to the set-theoretical Yang--Baxter equation (YBE). Following Drinfeld (\cite{MR1183474}), who first stated the problem, we say that for a given set $X$ and a function $r:X\times X\to X\times X$, the pair $(X,r)$ is a \emph{set-theoretical solution to the Yang--Baxter equation} if
\begin{equation}\label{YBE}
(id_X \times r)(r\times id_X)(id_X \times r)=(r\times id_X)(id_X \times r)(r\times id_X)
\end{equation}
holds.
A particular family of solutions is the family of \emph{non-degenerate} solutions, i.e. solutions obtained as
\begin{equation}\label{non-deg}
r:    X\times X\longrightarrow X\times X,\quad (x,y)\mapsto (\sigma_x(y),\tau_y(x))
\end{equation}
where $\sigma_x,\tau_x$ are permutations of $X$ for every $x\in X$. Non-degenerate solutions have been studied by several different authors \cite{MR1722951,MR1637256, MR1769723,MR1809284}.

Examples of non-degenerate involutive solutions to YBE are provided by \emph{braces}, introduced by Rump as a generalization of radical rings \cite{MR2278047}. In \cite{MR3177933} Cedó, Jespers and Okni\'nski settled an equivalent definition that was generalized later to \emph{skew (left) braces} by Guarnieri and Vendramin in \cite{MR3647970}. Skew braces
allow us to study non-involutive non-degenerate solutions.

A \emph{skew (left) brace} is a triple $(A,+,\circ)$ where $(A,+)$ and $(A,\circ)$ are groups (not necessarily abelian) such that
\[
a\circ(b+c)=a\circ b-a+a\circ c
\]
holds for every $a,b,c\in A$. Braces are skew braces for which the additive group is abelian and a skew brace $(A,+,\circ)$ is said to be a \emph{bi-skew} brace if also $(A,\circ,+)$ is a skew brace \cite{biskew}. 

The problem of finding non-degenerate solutions of \eqref{YBE} can be reduced to the classification problem of skew braces. Indeed, given an involutive non-degenerate solution as in \eqref{non-deg}, the group generated by $\setof{\sigma_x}{x\in X}$ has a canonical structure of brace. Bachiller, Cedó and Jespers \cite{MR3527540} provided a construction for all involutive non-degenerate solutions to the Yang--Baxter equation with a given brace structure over such group. In \cite{MR3835326}, Bachiller generalizes that construction by considering a \emph{permutation group} related to a non-degenerate solution which in turn have a natural structure of skew brace, \cite[Theorem 3.11]{MR3835326}. Therefore, in that work the classification of all non-degenerate solutions is reduced to the classification of all skew braces.

There was considerably recent progress on the classification problem for (skew) braces. Braces with cyclic additive group and braces of size $p^2q$ for $p,q$ primes with $q>p+1$ have been classified respectively in \cite{MR2298848, Rump} and \cite{Dietzel}. Bachiller \cite{MR3320237} solved the problem for braces of order $p^3$ where $p$ is a prime and Nejabati Zenouz \cite{NZ} completed the classification of skew braces of order $p^3$. In \cite{NZpaper}, Nejabati Zenouz include the automorphism groups of the skew braces of size $p^3$ of Heisenberg type. In this paper we provide a solution to the following problem. 
\begin{problem*} \cite[Problem 2.15]{problems} Let $p$ and $q$ be different prime numbers. Construct all skew left braces of size $pq$ up to isomorphism.
\end{problem*}

Our classification is based on the algorithm for the construction of skew braces with a given additive group developed in \cite{MR3647970}. Indeed, the algorithm allows to obtain all the skew braces with additive group $A$ from regular subgroups of its holomorph $\Hol(A)=A\rtimes\Aut(A)$. The isomorphism classes of skew braces are parametrized by the orbits of such subgroups under the action by conjugation of the automorphism group of $A$ in $\Hol(A)$ \cite[Section 4]{MR3647970}. 


The connection between skew braces, regular subgroups and Hopf--Galois extensions was observed by Bachiller \cite[Remark 2.8]{MR3465351}. We refer the reader to the Appendix of \cite{MR3763907} for further details on such connection.

A purely group theoretical approach to Hopf--Galois extension was pointed out by Byott in \cite{Byott_1}, in order to enumerate Hopf--Galois extensions over a given group. In particular Hopf--Galois extensions of degree $pq$ where $p>q$ are primes have been investigated in \cite{Byott_pq} through an explicit description of regular subgroups.

We are using such description as the first step of the classification for skew braces of size $pq$. Our main results are summarized in the following theorem.


\begin{theorem*}
	Let $p>q$ be primes. If $p\not\equiv 1\pmod q$ there is only one skew brace of size $pq$, the trivial one. If $p\equiv 1\pmod q$, a complete list of the $2q+2$ skew braces of order $pq$, up to isomorphism, is the following, where $g$ is a fixed element of $\mathbb{Z}_p$ of multiplicative order $q$: 	
	\begin{itemize}
 	    \item Additive group $\mathbb{Z}_{p}\times \mathbb{Z}_q$:
 	    \begin{eqnarray*}
\begin{pmatrix} n \\ m \end{pmatrix} + \begin{pmatrix} s \\ t \end{pmatrix} =\begin{pmatrix} n+s \\ m+t \end{pmatrix};
	\end{eqnarray*}
	        \begin{enumerate}
	            \item[(i)] the trivial skew brace over $\mathbb{Z}_{p}\times \mathbb{Z}_q$;
	            \item[(ii)] the bi-skew brace $(A,+,\circ)$ where
	            \begin{eqnarray*}
\begin{pmatrix} n \\ m \end{pmatrix} \circ \begin{pmatrix} s \\ t \end{pmatrix} =\begin{pmatrix} n+g^{m}s \\ m+t \end{pmatrix}.
		\end{eqnarray*}
	        \end{enumerate}
	        
	   \item Additive group $\mathbb{Z}_p\rtimes_g \mathbb{Z}_q$:
	   \begin{eqnarray*}
\begin{pmatrix} n \\ m \end{pmatrix} + \begin{pmatrix} s \\ t \end{pmatrix} =\begin{pmatrix}  n+g^{m}s \\ m+t \end{pmatrix};
	\end{eqnarray*}
	        \begin{enumerate}
	            \item[(i)] the trivial skew brace over $\mathbb{Z}_p\rtimes_g \mathbb{Z}_q$;
	            \item[(ii)] the skew brace $(A,+,\circ)$ where	\begin{eqnarray*}
\begin{pmatrix} n \\ m \end{pmatrix} \circ \begin{pmatrix} s \\ t \end{pmatrix} =\begin{pmatrix} g^t n+g^{m}s \\ m+t \end{pmatrix};
	\end{eqnarray*}
	            \item[(iii)] the bi-skew braces $A_\gamma=(A,+,\circ)$ for $1<\gamma\leq q$ where 
\begin{eqnarray*}
\begin{pmatrix} n \\ m \end{pmatrix} \circ \begin{pmatrix} s \\ t \end{pmatrix} =\begin{pmatrix} n+\left(g^{\gamma}\right)^m s \\ m+t \end{pmatrix};   
\end{eqnarray*}
	            \item[(iv)] the skew braces $A_\mu=(A,+,\circ)$ for $1< \mu\leq q$ where%
\begin{eqnarray*}
\begin{pmatrix} n \\ m \end{pmatrix} \circ \begin{pmatrix} s \\ t \end{pmatrix} =\begin{pmatrix} g^t n+\left(g^{\mu}\right)^m s \\ m+t \end{pmatrix}.
\end{eqnarray*}
		 
\end{enumerate}
	\end{itemize}
\end{theorem*}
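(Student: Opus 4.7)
The plan is to apply the Guarnieri--Vendramin correspondence mentioned in the introduction: skew braces with additive group $A$ are in bijection with regular subgroups of the holomorph $\Hol(A)=A\rtimes\Aut(A)$, and isomorphism classes correspond to orbits of such subgroups under conjugation by $\Aut(A)$ inside $\Hol(A)$. The first step is therefore to fix the list of groups of order $pq$: when $p\not\equiv 1\pmod q$ only the cyclic group $\mathbb{Z}_p\times\mathbb{Z}_q$ occurs, while when $p\equiv 1\pmod q$ we also get the non-abelian semidirect product $\mathbb{Z}_p\rtimes_g\mathbb{Z}_q$. For each of these candidates for $(A,+)$, I would compute $\Aut(A)$ and $\Hol(A)$ explicitly in terms of a basis adapted to the Sylow decomposition, so that later computations become matrix-level.

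Next, I would invoke Byott's classification from \cite{Byott_pq} to list all regular subgroups of $\Hol(A)$. In the case $p\not\equiv 1\pmod q$ the only regular subgroup is the image of $A$ under left regular representation, which immediately gives the trivial skew brace. In the congruent case, Byott's description provides finitely many parametric families of regular subgroups in each holomorph, each isomorphic (as an abstract group) to one of the two groups of order $pq$; this abstract group will be the multiplicative structure $(A,\circ)$ of the corresponding skew brace.

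Then for each regular subgroup $N\le\Hol(A)$ I would transport its group law to $A$ via the unique bijection $A\to N$ arising from the regular action of $N$ on $A$, and read off the resulting operation $\circ$ in coordinates. Plugging in the explicit generators of $N$ coming from Byott's parametrization yields formulas of the shape stated in (i)--(iv), with the parameters $\gamma,\mu$ encoding the choice of embedding $\mathbb{Z}_q\hookrightarrow\Aut(\mathbb{Z}_p)$. I would then verify the bi-skew assertions directly by checking that swapping $+$ and $\circ$ still satisfies the skew brace axiom, which for these abelian-looking formulas is a short computation.

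The main obstacle, and the place where most of the work sits, is to reduce the parametric families of regular subgroups to $\Aut(A)$-orbits so as to get the precise count $2q+2$. For this I would compute the conjugation action of $\Aut(A)$ on Byott's parameters and show that (a) the two trivial/canonical embeddings form singleton orbits, (b) the families indexed by $\gamma$ and by $\mu$ give $q-1$ orbits each (so $2(q-1)$ non-trivial non-isomorphic skew braces coming from the non-abelian additive group, plus one non-trivial one from the abelian additive group), and (c) no coincidences occur across the two additive types, which follows from the isomorphism type of $(A,+)$ being an invariant. Care is needed because different values of $\gamma$ or $\mu$ may correspond to genuinely different skew braces even though their multiplicative groups are abstractly isomorphic; this is checked by exhibiting an $\Aut(A)$-invariant distinguishing them, for instance the fixed-point structure of the action $\lambda\colon(A,\circ)\to\Aut(A,+)$.
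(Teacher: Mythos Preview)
Your proposal follows essentially the same route as the paper: Guarnieri--Vendramin correspondence, Byott's list of regular subgroups of $\Hol(A)$, then orbit reduction under $\Aut(A)$ and extraction of explicit formulas. The paper organizes the non-abelian additive case by the size of $\pi_2(G)$ (equivalently $|A|/|\ker\lambda|$), which takes the values $1,p,q,pq$ and corresponds exactly to cases (i)--(iv); the bi-skew verifications are handled by a general semidirect-product lemma rather than by direct checking, but that difference is cosmetic.

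There is, however, a concrete gap in your orbit count. In your item (b) you account only for the $\gamma$- and $\mu$-families ($q-1$ orbits each) and one non-trivial cyclic-type brace, which together with the two trivial braces yields $2+1+2(q-1)=2q+1$, not $2q+2$. What you are missing is precisely case (ii) of the non-abelian additive group: the regular subgroups of $\Hol(M)$ whose image under $\pi_2$ has size $p$ (so $|\ker\lambda|=q$). In Byott's list these are the groups $G_c=\langle \sigma^{a_0}\alpha,\ \sigma^c\tau\rangle$ for $0\le c\le p-1$; they are all $\Aut(M)$-conjugate and give a single additional skew brace with $\circ$-operation $(n,m)\circ(s,t)=(g^t n+g^m s,\,m+t)$. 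Your organizing invariant ``fixed-point structure of $\lambda$'' is the right idea, but you need to run it over \emph{all four} possible sizes of $\mathrm{Im}\,\lambda$, not just $1$, $q$, and $pq$. Once this fourth family is included, your argument matches the paper's and the count is correct.
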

Our result agrees with the enumeration formula for skew braces of square free size recently presented by Byott and Alabdali, see \cite[Section 7.2]{byott_squarefree}.

This paper is organized as follows: in Section \ref{preliminaries} we collect some basic definitions and we present more details about the classification strategy. In Section \ref{groups_pq} we describe the groups of size $pq$ and their automorphisms. In Section \ref{classification} we deal with the classification computing the orbits of regular subgroups in the holomorph of the relevant groups leading to the main result. 

\section{Preliminaries}\label{preliminaries}

A triple $(A,+,\circ)$ is said to be a \emph{skew (left) brace} if both $(A,+)$ and $(A,\circ)$ are groups and $$a\circ(b+c)=a\circ b-a+a\circ c$$ holds for every $a,b,c\in A$. Following the standard terminology for Hopf--Galois extensions, if $\chi$ is a group theoretical property, we say that a skew brace $(A,+,\circ)$ is of \emph{$\chi$-type} if $(A,+)$ has the property $\chi$. 

Given a skew brace $(A,+,\circ)$, the group $(A,\circ)$ acts on $(A,+)$ by automorphisms. Indeed the mapping
\begin{equation}\label{axiom}
\lambda:(A,\circ)\to \Aut(A,+), \quad \lambda_a(b)=-a+a\circ b,
\end{equation}
is a homomorphism of groups.

A {\it bi-skew brace} is a skew brace $(A,+,\circ)$ such that $(A,\circ,+)$ is also a skew brace (see \cite{biskew}). Equivalently
\begin{equation}\label{eq for biskew}
x+(y\circ z)=(x+ y)\circ x'\circ (x+ z) 
\end{equation}
holds for every $x,y,z\in A$, where $x'$ denotes the inverse of $x$ in $(A,\circ)$. We provide a construction of bi-skew braces over semidirect products of groups. Note that the first part is a special case of \cite[Proposition 4.6.12]{NZ}. We include the proof for completeness.

\begin{proposition}\label{biskew}
	Let $A$ be a group, $B$ be an abelian group and $\eta,\rho:B\longrightarrow \aut{A}$ be group homomorphisms. If $[Im(\rho),Im(\eta)]=1$ then $G_{\eta,\rho}=(G,+,\circ)$ where $(G,+)=A\rtimes_{\eta} B$ and $(G,\circ)=A\rtimes_{\rho} B$ is a bi-skew brace with $A\times \{0\}\leq \ker{\lambda}$.
\end{proposition}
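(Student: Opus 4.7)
The plan is to verify the skew brace axiom, then bi-skew by symmetry, then compute $\lambda$ on $A\times\{0\}$.

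First, I would fix notation: write $g=(a,b)$, $h=(a',b')$, $k=(a'',b'')$ with $a,a',a''\in A$ and $b,b',b''\in B$, so that
\[
(a,b)+(a',b')=(a\,\eta_b(a'),\,b+b'),\qquad (a,b)\circ(a',b')=(a\,\rho_b(a'),\,b+b').
\]
Since $\eta_0=\rho_0=\id$, the $+$-inverse of $(a,b)$ is $(\eta_{-b}(a^{-1}),-b)$. The main step is checking the distributive law
\[
g\circ(h+k)=(g\circ h)-g+(g\circ k).
\]
A direct computation gives
\[
g\circ(h+k)=\bigl(a\,\rho_b(a')\,\rho_b(\eta_{b'}(a'')),\,b+b'+b''\bigr),
\]
while unpacking the right-hand side (using that $B$ is abelian to simplify $b+b'-b=b'$) yields
\[
(g\circ h)-g+(g\circ k)=\bigl(a\,\rho_b(a')\,\eta_{b'}(\rho_b(a'')),\,b+b'+b''\bigr).
\]
Equality of the two sides for all choices of parameters reduces precisely to $\rho_b\,\eta_{b'}=\eta_{b'}\,\rho_b$ on $A$ for every $b,b'\in B$, which is the hypothesis $[\mathrm{Im}(\rho),\mathrm{Im}(\eta)]=1$. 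So the skew brace axiom holds.

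For the bi-skew part I would observe that the hypotheses are completely symmetric in $\eta$ and $\rho$: $B$ is abelian, and the commutation condition does not distinguish between $\rho$ and $\eta$. Thus, repeating the calculation above with the roles of $+$ and $\circ$ interchanged (equivalently, verifying \eqref{eq for biskew}) works verbatim and shows that $(G,\circ,+)$ is also a skew brace. Finally, for $(a,0)\in A\times\{0\}$ and any $h=(a',b')$, one computes $(a,0)\circ(a',b')=(aa',b')$ and $-(a,0)=(a^{-1},0)$, so
\[
\lambda_{(a,0)}(h)=-(a,0)+(a,0)\circ h=(a^{-1},0)+(aa',b')=(a^{-1}\eta_0(aa'),b')=(a',b')=h,
\]
showing $A\times\{0\}\subseteq\ker\lambda$.

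The only genuine content is the identity $\rho_b\eta_{b'}=\eta_{b'}\rho_b$, which I would expect to be the sole obstacle; everything else is bookkeeping. No separate verification that $(A,+)$ and $(A,\circ)$ are actually groups is required, since semidirect products with given actions are groups by construction.
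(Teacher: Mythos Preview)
Your proof is correct and follows essentially the same approach as the paper: a direct computation of both sides of the skew brace identity reducing to the commutation $\rho_b\eta_{b'}=\eta_{b'}\rho_b$, symmetry for the bi-skew part, and a direct computation of $\lambda$. The only minor difference is that the paper derives the general formula $\lambda_{(x,s)}(y,t)=(\eta_{-s}\rho_s(y),t)$ and reads off $A\times\{0\}\leq\ker\lambda$ from it, whereas you specialize immediately to $(a,0)$; both are fine.
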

\begin{proof}
		Let us denote by $\rho_x$ the image of $\rho$ and by $\eta_x$ the image of $\eta$ for every $x\in $B. Then
			\begin{eqnarray*}
			(x,s)\circ\left((y,t)+(z,u)\right)&=&(x,s)\circ(y\eta_t(z),t+u)\\
			&=&(x\rho_s(y)\rho_s(\eta_t(z)),s+t+u)
		\end{eqnarray*}	
	and		
			\begin{eqnarray*}
			\left((x,s)\circ(y,t)\right)- (x,s)+ \left((x,s)\circ(z,u)\right)&=&
			(x\rho_s(y),s+t)+(\eta_{-s}(x)^{-1},-s)+(x\rho_s(z),s+u)\\
			&=&(x\rho_s(y)\eta_t(x)^{-1},t)+(x\rho_s(z),s+u)\\
			&=&(x\rho_s(y)\eta_t(x)^{-1}\eta_t(x)\eta_t(\rho_s(z)),t+s+u)\\
			&=&(x\rho_s(y)\eta_t(\rho_s(z)),t+s+u)
		\end{eqnarray*}
for every $x,y,z\in A$ and $s,t,u\in B$. Since $\eta_t$ and $\rho_s$ commute then $(A,+,\circ)$ is a skew brace. The same argument shows that $(A,\circ,+)$ is a skew brace as well. Moreover,
\begin{eqnarray}\label{formula for lambda}
\lambda_{(x,s)}(y,t)&=&-(x,s)+(x,s)\circ (y,t)=(\eta_{-s}(x)^{-1},-s)+(x\rho_s(y),t+s)=\notag\\
&=&(\eta_{-s}(x)^{-1}\eta_{-s}(x)\eta_{-s}(\rho_s(y)),t)=\notag\\
&=&(\eta_{-s}(\rho_s(y)),t).
\end{eqnarray}
for every $x,y\in A$ and $s,t\in B$. Then $A\times \{0\}\leq \ker{\lambda}$.
\end{proof}
Proposition \ref{biskew} applies to cyclic groups, since they have abelian automorphism groups.
\begin{corollary}\label{biskew on cyclic}
Let $A$ be a cyclic group and $B$ be an abelian group, $(G,+)=A\rtimes_{\eta} B$ and $(G,\circ)=A\rtimes_{\rho} B$. Then $G_{\eta,\rho}$ is a bi-skew brace and $A\times \{0\}\leq \ker{\lambda}$.
\end{corollary}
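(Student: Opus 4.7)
The plan is to derive this as an immediate specialization of Proposition \ref{biskew}. Since Proposition \ref{biskew} already does all the heavy lifting (verifying the brace axiom on both $(A,+,\circ)$ and $(A,\circ,+)$, and computing $\lambda$ explicitly), the only thing left to check is that its hypothesis $[\mathrm{Im}(\rho),\mathrm{Im}(\eta)]=1$ is automatic under the new assumption.

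First I would recall the standard fact that if $A$ is a cyclic group then $\aut{A}$ is abelian: for $A$ finite of order $n$ one has $\aut{A}\cong (\Z/n\Z)^\times$, and for $A$ infinite cyclic one has $\aut{A}\cong \Z/2\Z$; in either case the automorphism group is abelian. Consequently, any two subgroups of $\aut{A}$ commute elementwise, so in particular $\mathrm{Im}(\rho)$ and $\mathrm{Im}(\eta)$ commute, regardless of the choice of the homomorphisms $\eta,\rho:B\to\aut{A}$.

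Given this observation, the hypothesis of Proposition \ref{biskew} is satisfied for every pair $(\eta,\rho)$, and hence the conclusion applies verbatim: $G_{\eta,\rho}=(G,+,\circ)$ with $(G,+)=A\rtimes_\eta B$ and $(G,\circ)=A\rtimes_\rho B$ is a bi-skew brace, and by the explicit formula \eqref{formula for lambda} one has $A\times\{0\}\leq\ker{\lambda}$.

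There is essentially no obstacle here; the corollary is a direct consequence of the proposition once one notes that cyclicity of $A$ forces $\aut{A}$ to be abelian. The only mildly subtle point, which I would flag, is that $B$ still needs to be abelian (this is already assumed in the corollary) since Proposition \ref{biskew} requires it; without that assumption the additive structure $A\rtimes_\eta B$ would not fit the framework of the proposition.
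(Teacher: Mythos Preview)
Your proposal is correct and follows the same approach as the paper: the paper simply remarks that Proposition~\ref{biskew} applies to cyclic groups since their automorphism groups are abelian, which is exactly the observation you spell out in detail.
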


The \emph{holomorph} of a group $(A,+)$ is the semidirect product $A\rtimes \Aut(A)$ where the operation is
\[
(a,f)(b,g)=(a+f(b),fg),
\]
for every $a,b\in A$ and $f,g\in \aut{A}$. We denote by $\pi_1:\Hol(A)\to A$ and $\pi_2:\Hol(A)\to \Aut(A)$ the canonical surjections. Note that $\Hol(A)$ acts on $A$ by $(a,f)(b)=a+f(b)$, for all $(a,f)\in \Hol(A)$ and $b\in A$. Thus $\Hol(A)$ is a group of permutations on $A$.

Recall that a group of permutations $N$ on a set $X$ is said to be \emph{regular} if, given any $x,y\in X$, there is a unique $g\in N$ such that $g(x)=y$. 

From \cite{MR3647970}, we know that given a group $(A,+)$ (not necessarily abelian) we have a bijective correspondence between isomorphism classes of skew braces $(A,+,\circ)$ and orbits of regular subgroups of $\Hol(A,+)$ under conjugation by $\Aut(A,+)$ identified with the subgroup $\{1\}\times \aut{A}\leq \Hol(A,+)$. If $G$ is a regular subgroup of $\Hol(A,+)$, then it is easy to verify that the map $\pi_1|_G:G\to A$ is a bijection.

\begin{theorem}\cite[Theorem 4.2, Proposition 4.3]{MR3647970}\label{thm:skew_holomorph}
Let $(A,+)$ be a group. If $\circ$ is an operation such that $(A,+,\circ)$ is a skew brace, then $\{ (a,\lambda_a): a\in A \}$ is a regular subgroup of $\Hol(A,+)$. Conversely, if $G$ is a regular subgroup of $\Hol(A,+)$, then $A$ is a skew brace with $$a\circ b=a+f(b)$$ where $(\pi_1|_G)^{-1}(a)=(a,f)\in G$ and $(A,\circ)\cong G$.

Moreover, isomorphism classes of skew braces over $A$ are in bijective correspondence with the orbits of regular subgroups of $\Hol(A)$ under the action of $\aut{A}$ by conjugation.
\end{theorem}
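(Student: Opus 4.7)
The plan is to prove the three assertions in sequence: the direct construction of a regular subgroup from a skew brace, the converse construction, and finally the identification of isomorphism classes with orbits under conjugation.

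For the first implication, starting from a skew brace $(A,+,\circ)$, I would consider the map $a \mapsto (a,\lambda_a)$ from $(A,\circ)$ into $\Hol(A,+)$, which is well-defined since $\lambda_a \in \Aut(A,+)$ by \eqref{axiom}. The key step is checking it is a homomorphism: multiplying in $\Hol(A,+)$ gives $(a,\lambda_a)(b,\lambda_b) = (a + \lambda_a(b),\, \lambda_a \lambda_b)$, and the two identities $a + \lambda_a(b) = a\circ b$ (immediate from $\lambda_a(b) = -a + a\circ b$) together with $\lambda_{a\circ b} = \lambda_a \lambda_b$ (the homomorphism property of $\lambda$) deliver exactly $(a\circ b, \lambda_{a\circ b})$. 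Injectivity follows by post-composing with $\pi_1$, so the image is a subgroup of $\Hol(A,+)$ isomorphic to $(A,\circ)$. Regularity is then immediate: the image acts on $A$ via $(a,\lambda_a)\cdot b = a + \lambda_a(b) = a\circ b$, which is left translation in $(A,\circ)$.

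Conversely, let $G \leq \Hol(A,+)$ be regular on $A$. Since an element $(g,h) \in G$ sends $0 \in A$ to $g + h(0) = g$, regularity forces $\pi_1|_G$ to be a bijection; hence for each $a \in A$ there is a unique $f_a \in \Aut(A,+)$ with $(a,f_a) \in G$. Transporting the group operation of $G$ along $\pi_1|_G^{-1}$ yields an operation on $A$ that, by the formula $(a,f_a)(b,f_b) = (a + f_a(b),\, f_a f_b)$, coincides with $a \circ b = a + f_a(b)$, so $(A,\circ) \cong G$ by construction. The skew brace axiom reduces to the one-line calculation $a\circ(b+c) = a + f_a(b+c) = a + f_a(b) + f_a(c) = (a + f_a(b)) - a + (a + f_a(c)) = (a\circ b) - a + (a\circ c)$, where the additivity of $f_a$ is used once.

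For the final statement, I would show that an additive automorphism $\phi \in \Aut(A,+)$ is an isomorphism between skew braces $(A,+,\circ_1)$ and $(A,+,\circ_2)$ if and only if conjugation by $(0,\phi) \in \Hol(A,+)$ sends the regular subgroup $G_1$ associated with $\circ_1$ onto $G_2$. The heart of the verification is the identity $\phi\,\lambda_a^{(1)}\phi^{-1} = \lambda_{\phi(a)}^{(2)}$, which one obtains by applying $\phi$ to $\lambda_a^{(1)}(x) = -a + a\circ_1 x$ and using that $\phi$ is additive and respects $\circ$. A direct computation in $\Hol(A,+)$ then gives $(0,\phi)(a,\lambda_a^{(1)})(0,\phi)^{-1} = (\phi(a), \lambda_{\phi(a)}^{(2)})$, which when ranged over $a\in A$ yields $(0,\phi)\,G_1\,(0,\phi)^{-1} = G_2$. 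I do not anticipate a genuine obstacle, since the theorem is essentially bookkeeping that recasts the skew brace axiom as a compatibility between the two coordinates in $\Hol(A,+)$; the subtlest point is in the second step, where one must recognise that the bijection $\pi_1|_G$ automatically transports the group structure of $G$ onto $A$, so no separate verification of associativity or inverses for $\circ$ is required.
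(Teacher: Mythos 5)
Your proof is correct and complete; the paper itself does not prove this statement but imports it from Guarnieri--Vendramin \cite[Theorem 4.2, Proposition 4.3]{MR3647970}, and your argument is exactly the standard one given there (left translations in $(A,\circ)$ realised inside $\Hol(A,+)$, transport of structure along $\pi_1|_G$, and the equivariance identity $\phi\,\lambda_a^{(1)}\phi^{-1}=\lambda^{(2)}_{\phi(a)}$). The only point left implicit is that the two constructions are mutually inverse, which is immediate since $f_a=\lambda_a$, so the correspondence is indeed a bijection before passing to orbits.
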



An explicit description of all the regular subgroups of $\Hol(A)$ for groups of size $pq$, where $p,q$ are different primes, is given in \cite{Byott_pq}. So, we can construct all skew braces with additive group isomorphic to $A$ by finding representatives of the orbits of regular subgroups under the action by conjugation of $\Aut(A)$ on $\Hol(A)$ and provide an explicit formula for the operations of such skew braces using Theorem \ref{thm:skew_holomorph}.  Following \cite{Byott_pq}, we will denote by $e^\prime(G,A,m)$ the number of the regular subgroups of $\Hol(A)$ isomorphic to a given group $G$ such that their image under $\pi_2$ has size $m$. 

\begin{remark}\label{remark for lambdas}
Let $(A,+)$ be a group and $G$ a regular subgroup of $\Hol(A)$. According to Theorem \ref{thm:skew_holomorph}, $(A,+,\circ)$ where 
\[
a\circ b=a+\pi_2((\pi_1|_G)^{-1}(a))(b)
\]
for every $a,b\in A$ is a skew brace. In other words, $\lambda_a=\pi_2(\pi_1|_G^{-1}(a))$ and 
 so $|\ker{\lambda}|=\frac{|G|}{|\pi_2(G)|}$.
\end{remark}

\section{Groups of order {\it pq} and their automorphism groups}\label{groups_pq}

In this section we describe the groups of order $pq$ up to isomorphism and their automorphisms, where $p>q$ are primes.

 If $p\not\equiv 1 \pmod {q}$, the unique group of size $pq$ is the cyclic one, for which we are using the following presentation: 
\begin{equation}\label{eq:presentation_C}
C=\langle \sigma, \tau\, | \,\sigma^p=\tau^q=1,\, \tau\sigma=\sigma\tau \rangle.
\end{equation}

The group $\Aut(C)$ is isomorphic to $\mathbb{Z}_{p-1}\times \mathbb{Z}_{q-1}$ and it has the following presentation:
\begin{equation}\label{aut of M}
    \Aut(C)=\langle \phi, \psi \mid \phi^{p-1}=\psi^{q-1}=1,\, \phi\psi=\psi\phi \rangle
\end{equation}
where the morphisms $\phi$ and $\psi$ are defined by setting 
\begin{align*}
\phi(\sigma)&=\sigma^n,  & \psi(\sigma)&=\sigma,\\
\phi(\tau)&=\tau,  &\psi(\tau)&=\tau^{m},
\end{align*}
provided that $n$ has multiplicative order $p-1$ modulo $p$ and $m$ has multiplicative order $q-1$ modulo $q$.

 If $p\equiv 1 \pmod {q}$, there exists also a unique non-abelian group, namely
\begin{equation}\label{eq:presentation_M}
M=\langle \sigma, \tau\, |\, \sigma^p=\tau^q=1,\, \tau\sigma=\sigma^g\tau \rangle\cong \mathbb{Z}_p\rtimes_g \mathbb{Z}_q
\end{equation}
where $g$ has multiplicative order $q$ modulo $p$. Following the same notation as in \cite{Byott_pq}, we denote by $a_0$ an integer satisfying 
\begin{equation}\label{eq:a_0}
    (g-1)a_0\equiv 1 \pmod {p}.
\end{equation}
We are using the following well-known formula with no further reference:
\begin{equation}\label{eq:geometric}
    \sum_{i=0}^{r-1} g^i \equiv \frac{g^r-1}{g-1}\equiv a_0(g^r-1) \pmod{p}
\end{equation}
for every $r\in \mathbb{N}$.

A description of the elements of the group $\Aut(M)$ is given by the following lemma.
\begin{lemma}\cite[Lemma 2.3]{Ghorbani2014}\label{autom}
 The automorphisms of $M$ are $$\setof{\varphi_{i,j}}{1\leq i\leq p-1,\, 0\leq j\leq p-1}$$ 
 defined by setting $\varphi_{i,j}(\sigma)=\sigma^i$ and $\varphi_{i,j}(\tau)=\sigma^j\tau$.  
\end{lemma}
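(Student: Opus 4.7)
The plan is to exploit the presentation of $M$: any automorphism $\varphi$ is uniquely determined by $\varphi(\sigma)$ and $\varphi(\tau)$, subject to the images satisfying the defining relations $\sigma^p=\tau^q=1$ and $\tau\sigma=\sigma^g\tau$. So the proof has two parts: narrowing down the possible images, and then checking that every surviving candidate is a genuine automorphism.

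First I would pin down $\varphi(\sigma)$. Since $p>q$, Sylow's theorem applied to $M$ shows that the number of Sylow $p$-subgroups divides $q$ and is $\equiv 1\pmod p$, hence equals $1$. Thus $\langle\sigma\rangle$ is the unique Sylow $p$-subgroup, it is characteristic in $M$, and it contains every element of order $p$. Because $\varphi(\sigma)$ must have order $p$, we conclude $\varphi(\sigma)=\sigma^i$ for some $1\le i\le p-1$.

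Next I would determine $\varphi(\tau)$. A short induction using $\tau\sigma=\sigma^g\tau$ yields
\[
(\sigma^a\tau^b)^k=\sigma^{a(1+g^b+g^{2b}+\cdots+g^{(k-1)b})}\tau^{kb},
\]
and specialising to $k=q$ with $b\not\equiv 0\pmod q$ the exponent of $\sigma$ collapses by the geometric sum formula \eqref{eq:geometric}, giving $(\sigma^a\tau^b)^q=1$. Hence every element of order $q$ has the form $\sigma^j\tau^k$ with $k\not\equiv 0\pmod q$. Writing $\varphi(\tau)=\sigma^j\tau^k$ and applying $\varphi$ to $\tau\sigma=\sigma^g\tau$, one obtains $\sigma^{j+ig^k}\tau^k=\sigma^{ig+j}\tau^k$, which forces $g^k\equiv g\pmod p$. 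Since $g$ has multiplicative order $q$ modulo $p$, this gives $k\equiv 1\pmod q$, so $k=1$ and $\varphi(\tau)=\sigma^j\tau$ with $0\le j\le p-1$.

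Finally, I would verify that each map $\varphi_{i,j}$ described above really is an automorphism. The relations $\varphi_{i,j}(\sigma)^p=1$ and $\varphi_{i,j}(\tau)^q=1$ hold by the previous identity, and a direct computation shows that $\varphi_{i,j}(\tau)\varphi_{i,j}(\sigma)=\varphi_{i,j}(\sigma)^g\varphi_{i,j}(\tau)$, so $\varphi_{i,j}$ extends to an endomorphism of $M$. Surjectivity is immediate: $\sigma^i$ generates $\langle\sigma\rangle$ since $\gcd(i,p)=1$, and then $\tau=\sigma^{-j}(\sigma^j\tau)$ lies in the image, so bijectivity follows by finiteness. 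The only mildly delicate step is the geometric-sum computation together with the commutation relation; everything else is routine bookkeeping, which is why I would not expect a serious obstacle.
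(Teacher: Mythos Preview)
Your argument is correct. Note, however, that the paper does not supply its own proof of this lemma: it is quoted directly from \cite[Lemma~2.3]{Ghorbani2014} and stated without justification. So there is no proof in the paper to compare against; your proposal is a valid self-contained substitute. The only remark is that your claim ``every element of order $q$ has the form $\sigma^j\tau^k$ with $k\not\equiv 0\pmod q$'' is stated slightly loosely (you have shown such elements have order $q$, not the converse), but this does not matter for the argument since you only need that $\varphi(\tau)$, having order $q$, must be of the form $\sigma^j\tau^k$ with $k\neq 0$, and that is immediate from the normal form $\sigma^a\tau^b$ for elements of $M$.
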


\section{Skew Braces of size {\it pq}}\label{classification}

In this section, $p>q$ are primes.

\subsection{Trivial skew braces}

A skew brace $(A,+,\circ)$ is said to be {\it trivial} if $a+b=a\circ b$ for every $a,b\in A$. For every group $G$ there exists a unique trivial skew brace $A$ with $G=(A,+)=(A,\circ)$. Therefore if $p\equiv 1\pmod {q}$, there exists two trivial skew braces of size $pq$. On the other hand, if $p\not\equiv 1\pmod q$ there exists a unique skew brace of size $pq$ and it is trivial.
\begin{proposition}
Let $p>q$ be primes such that $p\not\equiv 1 \pmod q$. There exists a unique skew brace of size $pq$ and it is trivial with $(A,+)= (A,\circ)\cong \mathbb{Z}_{pq}$.
\end{proposition}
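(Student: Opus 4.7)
The plan is to apply Theorem \ref{thm:skew_holomorph}, reducing the problem to enumerating regular subgroups of $\Hol(A,+)$ up to $\Aut(A,+)$-conjugacy for every possible additive group $A$ of size $pq$. Under the hypothesis $p\not\equiv 1\pmod q$ (together with the automatic $q\not\equiv 1\pmod p$, since $2\leq q<p$ forces $q\bmod p=q\neq 1$), the only group of order $pq$ is the cyclic one $C\cong\mathbb{Z}_{pq}$ of \eqref{eq:presentation_C}. Hence both $(A,+)$ and $(A,\circ)$ must be isomorphic to $C$, and it suffices to show that $\Hol(C)$ contains a unique subgroup of order $pq$, namely $C\times\{1\}$, which will automatically produce a single orbit and correspond to the trivial skew brace.

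The decisive step is a Sylow count in $\Hol(C)$. Using the description \eqref{aut of M} of $\Aut(C)\cong\mathbb{Z}_{p-1}\times\mathbb{Z}_{q-1}$, the holomorph factors as a direct product
\[
\Hol(C)\cong H_p\times H_q,\qquad H_r=\mathbb{Z}_r\rtimes\mathbb{Z}_{r-1},
\]
because $\phi$ fixes $\tau$ and $\psi$ fixes $\sigma$. Inside each factor $H_r$ the Sylow $r$-subgroup is unique (the count is $\equiv 1\pmod r$ and divides $r-1$, forcing the value $1$). Now the hypothesis $p\not\equiv 1\pmod q$ gives $q\nmid p-1$ and hence $q\nmid |H_p|=p(p-1)$; symmetrically $p\nmid |H_q|=q(q-1)$ because $p>q-1$ and $p\neq q$. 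Therefore the Sylow $p$- and $q$-subgroups of $\Hol(C)$ are each unique and equal to $\langle\sigma\rangle\times\{1\}$ and $\{1\}\times\langle\tau\rangle$, respectively.

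Any subgroup $G\leq\Hol(C)$ of order $pq$ must contain both Sylow subgroups, so on order grounds $G=\langle\sigma,\tau\rangle\times\{1\}=C\times\{1\}$. Since this subgroup clearly acts regularly on $C$, it is the unique regular subgroup of $\Hol(C)$ and a fortiori its $\Aut(C)$-orbit is a singleton. Applying Theorem \ref{thm:skew_holomorph} with $(\pi_1|_G)^{-1}(a)=(a,\id)$ yields $a\circ b=a+\id(b)=a+b$, i.e.\ the trivial skew brace with $(A,+)=(A,\circ)\cong\mathbb{Z}_{pq}$. The only potential obstacle is transparency in the divisibility bookkeeping; once $q\nmid p(p-1)$ is in hand the Sylow picture collapses and everything else is immediate.
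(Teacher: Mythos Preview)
Your argument is correct, and it takes a genuinely different route from the paper. The paper dispatches this case in one line by observing that when $p\not\equiv 1\pmod q$ the integer $pq$ is a \emph{Burnside number} (every group of that order is cyclic) and then invoking the general fact \cite[Theorem~A.8]{MR3763907} that a Burnside number admits a unique skew brace. You instead stay entirely within the machinery set up in the paper: you use Theorem~\ref{thm:skew_holomorph} and the explicit description \eqref{aut of M} of $\Aut(C)$ to factor $\Hol(C)\cong(\mathbb{Z}_p\rtimes\mathbb{Z}_{p-1})\times(\mathbb{Z}_q\rtimes\mathbb{Z}_{q-1})$, and then a short Sylow count (using precisely the hypothesis $q\nmid p-1$) shows that the Sylow $p$- and $q$-subgroups are both normal, forcing $C\times\{1\}$ to be the unique subgroup of order $pq$.

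The trade-off is clear: the paper's proof is shorter but imports an external black box, while yours is self-contained and makes explicit where the congruence hypothesis enters. Your approach also dovetails naturally with the rest of Section~\ref{classification}, where everything is done by hand via regular subgroups of the holomorph.
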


\begin{proof}
If $p\not\equiv 1 \pmod {q}$, then $pq$ is a Burnside number, so there exists a unique skew brace of order $pq$ (\cite[Theorem A.8]{MR3763907}).
\end{proof}

\subsection{Skew braces of cyclic-type}\label{abelian type}

In this section we classify skew braces of cyclic type of size $pq$. This case is covered by the classification given in \cite{MR2298848,Rump} and we include it here for completeness. From now on, we consider the case $p\equiv 1 \pmod{q}$.

    Any non-trivial homomorphic image of a group of order $pq$ in $\aut{C}$ has size $q$, since $|\aut{C}|=(p-1)(q-1)$ and $q$ divides $p-1$. 
    So, we only have to consider regular subgroups such that the image under $\pi_2$ has size $q$.
In the following we will denote by $\alpha$ the automorphism of $C$ defined by 
$$\alpha(\sigma)=\sigma^g,\quad \alpha(\tau)=\tau.$$
\begin{lemma}\label{pro:cyclic_type}\cite[Lemma  4.1]{Byott_pq}
The regular subgroups of $\Hol(C)$ such that the image under $\pi_2$ has size $q$ are 
\begin{equation}
G_b=\langle \sigma, \tau^b\alpha \rangle\cong M,
\end{equation}		
for $1\leq b\leq q-1$. In particular, 
$e^\prime(C,C,q)=0$ and $e^\prime(M,C,q)=q-1$.

	\end{lemma}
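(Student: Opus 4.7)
The plan is to reduce the problem to a search for preimages of a single, explicit subgroup of $\Aut(C)$. First I would argue that any regular subgroup $G \leq \Hol(C)$ with $|\pi_2(G)|=q$ has $\pi_2(G)$ equal to a specific cyclic subgroup of order $q$. Indeed, $\Aut(C)\cong \mathbb{Z}_{p-1}\times\mathbb{Z}_{q-1}$ as in \eqref{aut of M}, and since $p\equiv 1\pmod q$ while $\gcd(q,q-1)=1$, the Sylow $q$-subgroup of $\Aut(C)$ lies in the $\mathbb{Z}_{p-1}$ factor and is cyclic. Therefore $\Aut(C)$ contains a \emph{unique} subgroup of order $q$, which is exactly $\langle\alpha\rangle$. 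Hence $\pi_2(G)=\langle\alpha\rangle$.

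Next I would analyse the kernel $K=\ker(\pi_2|_G)=G\cap(C\times\{1\})$. By regularity $|G|=pq$, and combined with $|\pi_2(G)|=q$ this gives $|K|=p$. Since $C\cong\mathbb{Z}_{pq}$ contains a unique subgroup of order $p$, namely $\langle\sigma\rangle$, we obtain $K=\langle\sigma\rangle\times\{1\}$. Choosing any preimage of $\alpha$ in $G$, of the form $(\sigma^a\tau^b,\alpha)$, and multiplying by $(\sigma^{-a},1)\in K$, we may assume $G=\langle\sigma,(\tau^b,\alpha)\rangle$ for some $0\leq b\leq q-1$.

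The last step is to determine which values of $b$ give a regular subgroup, to identify the isomorphism type, and to verify that different choices of $b$ produce different subgroups. Setting $y=(\tau^b,\alpha)$, the fact that $\alpha(\tau)=\tau$ gives $y^k=(\tau^{kb},\alpha^k)$, so a normal form for elements of $\langle\sigma,y\rangle$ is $(\sigma^i\tau^{kb},\alpha^k)$ with $0\leq i<p$, $0\leq k<q$. For regularity we need $G\cap(\{0\}\times\Aut(C))=\{1\}$, which forces $\sigma^i\tau^{kb}=1$ to imply $k=0$; since $C$ is cyclic of order $pq$, this happens precisely when $b\not\equiv 0\pmod q$. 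A direct computation in $\Hol(C)$ also shows $y\sigma y^{-1}=(\sigma^g,1)$, so $G_b\cong M$ via the presentation \eqref{eq:presentation_M}, giving in particular $e'(C,C,q)=0$. Finally, the normal form shows that $(\tau^{b'},\alpha)\in G_b$ iff $b'=b$, so the groups $G_1,\ldots,G_{q-1}$ are pairwise distinct, yielding $e'(M,C,q)=q-1$.

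The main (mild) obstacle is the initial uniqueness claim for the order-$q$ subgroup of $\Aut(C)$: once that is in place, the rest reduces to bookkeeping with the semidirect product operation $(a,f)(b,g)=(a+f(b),fg)$ and the fact that $\alpha$ fixes $\tau$, which makes the powers of $y$ easy to compute.
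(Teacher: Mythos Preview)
Your proof is correct. Note, however, that the paper does not supply its own proof of this lemma: it is quoted from \cite[Lemma~4.1]{Byott_pq} and used as a black box. Your argument therefore fills in what the paper omits, and does so along the natural lines one would expect Byott's original proof to follow: pin down $\pi_2(G)=\langle\alpha\rangle$ by the uniqueness of the order-$q$ subgroup in the cyclic group $\mathbb{Z}_{p-1}$ (using $q\nmid q-1$), identify $\ker(\pi_2|_G)=\langle\sigma\rangle$ by order considerations, normalise the second generator, and then check regularity via the stabiliser of the identity. The computations $y^k=(\tau^{kb},\alpha^k)$ and $y\sigma y^{-1}=(\sigma^g,1)$ are correct since $\alpha$ fixes $\tau$ and $C$ is abelian, and your distinctness check via the normal form is sound.
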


The enumeration of skew braces with cyclic additive group follows from the enumeration of the orbits of regular subgroups of $\Hol(C)$.
\begin{proposition}\label{enum_ab_type}
There exists a unique non-trivial skew brace of abelian type of size $pq$. 
\end{proposition}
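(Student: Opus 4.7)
The plan is to apply Theorem \ref{thm:skew_holomorph}: isomorphism classes of skew braces with additive group $C$ are in bijection with orbits of regular subgroups of $\Hol(C)$ under the conjugation action of $\Aut(C)$. Lemma \ref{pro:cyclic_type} already enumerates the non-trivial regular subgroups as $G_b = \langle \sigma, \tau^b\alpha\rangle$ for $1\leq b\leq q-1$, so the task reduces to showing that these $q-1$ subgroups all lie in a single $\Aut(C)$-orbit. Once this is established, there will be exactly one non-trivial orbit, corresponding to exactly one non-trivial skew brace of cyclic type.

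First I would identify an automorphism of $C$ that moves the parameter $b$. The natural candidate is the generator $\psi \in \Aut(C)$ from (\ref{aut of M}), which fixes $\sigma$ and sends $\tau\mapsto \tau^m$ with $m$ of multiplicative order $q-1$ modulo $q$. Since $\psi$ fixes $\sigma$ and $\alpha$ fixes $\tau$ (see the definitions of the two automorphisms), $\psi$ and $\alpha$ commute in $\Aut(C)$.

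Next I would compute conjugation of $G_b$ by $\psi^k\in \Aut(C)\hookrightarrow \Hol(C)$. Using $(0,h)(c,f)(0,h^{-1})=(h(c),hfh^{-1})$, the generator $(\sigma,\id)$ is fixed, while $(\tau^b,\alpha)$ is sent to $(\psi^k(\tau^b),\psi^k\alpha\psi^{-k})=(\tau^{bm^k},\alpha)$. Therefore $\psi^k G_b\psi^{-k}=G_{bm^k\bmod q}$. Because $m$ generates $(\mathbb{Z}/q\mathbb{Z})^*$, the multiples $\{bm^k\bmod q:k\in\mathbb{Z}\}$ cover all of $\{1,\dots,q-1\}$ for any fixed $b$, so the $q-1$ subgroups $G_1,\dots,G_{q-1}$ form a single orbit.

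Combined with the orbit of the trivial regular subgroup $\{(c,\id):c\in C\}$, this yields exactly two orbits of regular subgroups of $\Hol(C)$, hence exactly one non-trivial skew brace of cyclic type, completing the proof. The argument is essentially a direct orbit computation; the only delicate point is checking that $\psi$ and $\alpha$ commute, which is automatic from the direct-product decomposition $C=\langle\sigma\rangle\times\langle\tau\rangle$ and the fact that each of $\psi,\alpha$ acts non-trivially on a single factor. An explicit formula for the $\circ$-operation can then be read off from Theorem \ref{thm:skew_holomorph} applied to any representative, say $G_1$, matching the operation displayed in case (ii) of the main theorem.
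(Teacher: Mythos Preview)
Your proof is correct and follows essentially the same approach as the paper: conjugate $G_b$ by powers of $\psi$ to obtain $\psi^k G_b \psi^{-k}=G_{bm^k}$, and use that $m$ has multiplicative order $q-1$ modulo $q$ to conclude all the $G_b$ lie in a single $\Aut(C)$-orbit. The only differences are expository, such as your explicit remark that $\psi$ and $\alpha$ commute and your use of the holomorph conjugation formula.
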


\begin{proof}
It is enough to show that all the subgroups of $\Hol(C)$ given in Lemma \ref{pro:cyclic_type} are conjugate by some element in $\aut{C}$. Indeed, 
	\begin{align*}
	\psi^jG_b\psi^{-j} &=\langle \psi^j\sigma\psi^{-j}, \psi^{j}\tau^b\alpha\psi^{-j} \rangle \\
	&= \langle \psi^{j}(\sigma), \psi^{j}(\tau)^b\alpha \rangle 
	\\
	&= \langle \sigma, \tau^{m^j b}\alpha \rangle \\
	&= G_{m^j b}.
	\end{align*}
	Since $m$ has multiplicative order $q-1$ modulo $q$, all the subgroups $G_b$ are conjugate.
\end{proof}

\begin{theorem}
	The unique non-trivial skew brace of cyclic type
	is $(A,+,\circ)$ where $(A,\circ)\cong\mathbb{Z}_p\rtimes_g\mathbb{Z}_q$, i.e. 
		\begin{eqnarray*}
 \begin{pmatrix} n \\ m \end{pmatrix} + \begin{pmatrix} s \\ t \end{pmatrix} =\begin{pmatrix} n+s \\ m+t \end{pmatrix},\qquad 
\begin{pmatrix} n \\ m \end{pmatrix} \circ \begin{pmatrix} s \\ t \end{pmatrix} =\begin{pmatrix} n+g^{m}s \\ m+t \end{pmatrix}
		\end{eqnarray*}
	for every $0\leq n,s\leq p-1,\, 0\leq m,t\leq q-1$. In particular, $(A,+,\circ)$ is a bi-skew brace.
\end{theorem}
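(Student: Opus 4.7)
The plan is to apply Theorem~\ref{thm:skew_holomorph} to the representative regular subgroup identified in Proposition~\ref{enum_ab_type}. By that proposition, every non-trivial skew brace of cyclic type of size $pq$ arises from a subgroup in the single $\Aut(C)$-orbit of $\{G_b:1\leq b\leq q-1\}$, so we may pick the representative $G_1=\langle \sigma,\tau\alpha\rangle$ and read off the $\circ$-operation from it.

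First I would compute the elements of $G_1$ explicitly. Since $\alpha(\tau)=\tau$, an easy induction in $\Hol(C)=C\rtimes\Aut(C)$ gives $(\tau\alpha)^m=(\tau^m,\alpha^m)$, hence $\sigma^n(\tau\alpha)^m=(\sigma^n\tau^m,\alpha^m)$. This shows $G_1=\{(\sigma^n\tau^m,\alpha^m):0\leq n\leq p-1,\,0\leq m\leq q-1\}$ and that $\pi_1|_{G_1}$ is a bijection onto $C$ with inverse $\sigma^n\tau^m\mapsto(\sigma^n\tau^m,\alpha^m)$. By Theorem~\ref{thm:skew_holomorph}, the induced operation is
\begin{equation*}
(\sigma^n\tau^m)\circ(\sigma^s\tau^t)=\sigma^n\tau^m+\alpha^m(\sigma^s\tau^t)=\sigma^{n+g^ms}\tau^{m+t},
\end{equation*}
which, identifying $\sigma^n\tau^m$ with the column vector $(n,m)^T\in\mathbb{Z}_p\times\mathbb{Z}_q$, is exactly the formula in the statement. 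In particular $(A,\circ)\cong G_1\cong M\cong\mathbb{Z}_p\rtimes_g\mathbb{Z}_q$ by Lemma~\ref{pro:cyclic_type}.

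Finally, to see that this is a bi-skew brace I would invoke Corollary~\ref{biskew on cyclic} with the choices $A=\mathbb{Z}_p$, $B=\mathbb{Z}_q$, $\eta$ the trivial homomorphism (so that $A\rtimes_\eta B=\mathbb{Z}_p\times\mathbb{Z}_q$ matches $(A,+)$) and $\rho$ given by $\rho(1)=(x\mapsto gx)$ (so that $A\rtimes_\rho B$ matches $(A,\circ)$). The hypothesis $[\operatorname{Im}\rho,\operatorname{Im}\eta]=1$ is automatic because $\Aut(\mathbb{Z}_p)$ is abelian, and the corollary applies directly.

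There is no real obstacle here: once Proposition~\ref{enum_ab_type} has pinned down a unique orbit, the argument is a direct computation in $\Hol(C)$ followed by a one-line application of Corollary~\ref{biskew on cyclic}. The only point requiring minor care is identifying $(\tau\alpha)^m$ correctly in the semidirect product, which uses $\alpha(\tau)=\tau$ in an essential way.
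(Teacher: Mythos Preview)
Your proof is correct and uses the same two ingredients as the paper (Proposition~\ref{enum_ab_type} and Corollary~\ref{biskew on cyclic}), but in the reverse order: you start from the unique orbit, pick $G_1$, and \emph{derive} the formula for $\circ$ via Theorem~\ref{thm:skew_holomorph}, whereas the paper simply \emph{verifies} that the displayed $(A,+,\circ)$ is a bi-skew brace with $|\ker\lambda|=p$ using Corollary~\ref{biskew on cyclic} and then invokes Proposition~\ref{enum_ab_type} for uniqueness. Your route is slightly longer but has the advantage of explaining where the formula comes from; the paper's route is shorter because it avoids computing in $\Hol(C)$ altogether.
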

\begin{proof}
According to Corollary \ref{biskew on cyclic}, $(A,+,\circ)$ is a bi-skew brace with $|\ker{\lambda}|=p$. By virtue of Proposition \ref{enum_ab_type}, $(A,+,\circ)$ is the unique skew brace with such properties.
\end{proof}

\subsection{Skew braces of non-abelian type}\label{non-abelian type}



In this section we also assume that $p$ and $q$ are primes such that $p\equiv 1\pmod q$. In order to enumerate non-trivial skew braces with additive group isomorphic to $M$ we need to compute $e^\prime(G,M,m)$ for $m\in\{p,q,pq\}$ and $G=C,M$.

The only subgroup of order $pq$ in $\Aut(M)$ is the subgroup generated by 
\begin{equation}\label{def of alpha and beta}
\alpha=\varphi_{1,1},\quad \beta=\varphi_{g,0}    
\end{equation}
 as defined in Lemma \ref{autom}.
Hence, for every skew brace of order $pq$, the image of $\lambda$ lie in such subgroup.

\begin{lemma}\label{lem:ker=q}\cite[Lemma 5.1]{Byott_pq}
The regular subgroups of $\Hol(M)$ such that the image under $\pi_2$ has size $p$ are 
\begin{equation}\label{reg groups ker=p}
	G_c=\langle \sigma^{a_0}\alpha, \sigma^c\tau \rangle\cong C
	\end{equation}
where $0\leq c\leq p-1$. In particular, 
	$e^\prime(M,M,p)=0$ and
	$e^\prime(C,M,p)=p$. 
\end{lemma}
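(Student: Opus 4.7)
The plan is to identify the regular subgroups $G \leq \Hol(M)$ of order $pq$ with $|\pi_2(G)| = p$ by separately determining $G \cap M$, the projection $\pi_2(G)$, and a canonical generator of the Sylow $p$-subgroup of $G$.

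First, since $|G| = pq$ and $|\pi_2(G)| = p$, the intersection $G \cap M$ has order $q$, so it is a Sylow $q$-subgroup of $M$; these are precisely $\langle \sigma^c \tau\rangle$ for $0 \leq c \leq p-1$, so $G \cap M = \langle \sigma^c \tau \rangle$ for some such $c$. Using Lemma \ref{autom} together with the composition rule $\varphi_{i,j}\varphi_{i',j'} = \varphi_{ii',\, j + ij'}$, I would check that $\langle \alpha \rangle = \{\varphi_{1,j} : j \in \mathbb{Z}_p\}$ is a normal subgroup of $\Aut(M)$ of order $p$, and hence (since $|\Aut(M)| = p(p-1)$) the unique Sylow $p$-subgroup; thus $\pi_2(G) = \langle \alpha \rangle$.

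Since $p > q$, the group $G$ has a unique normal Sylow $p$-subgroup $P$ with $P \cap M = \{1\}$, so $P$ is cyclic of order $p$ and generated by some $(x, \alpha) \in G$ with $(x, \alpha)^p = 1$, i.e.\ $x\,\alpha(x)\cdots \alpha^{p-1}(x) = 1$ in $M$. Writing $x = \sigma^u \tau^v$ and using \eqref{eq:geometric}, one computes $\alpha^i(\sigma^u\tau^v) = \sigma^{u + i a_0(g^v-1)}\tau^v$; multiplying these $p$ elements out in $M$ produces a final $\tau$-exponent of $pv$, which must vanish mod $q$, forcing $v = 0$. Then $x = \sigma^u$, and the relation $(x,\alpha)^p = 1$ is automatic. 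To pin down $u$ I would use normality of $P$: conjugating $(\sigma^u,\alpha)$ by $(\sigma^c\tau, 1)$ (using $\tau\sigma = \sigma^g\tau$ and $\alpha(\sigma^c\tau) = \sigma^{c+1}\tau$) gives $(\sigma^{gu-1}, \alpha)$, which must belong to $P = \{(\sigma^{ku}, \alpha^k)\}_{k=0}^{p-1}$; matching $\alpha$-components forces $(g-1)u \equiv 1 \pmod p$, and hence $u = a_0$.

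Conversely, for each $0 \leq c \leq p-1$, a short calculation using $(g-1)a_0 \equiv 1 \pmod p$ shows that the two generators of $G_c = \langle (\sigma^{a_0}, \alpha), (\sigma^c\tau, 1)\rangle$ commute, so $G_c$ is abelian of order $pq$ and, having elements of coprime orders $p$ and $q$, is cyclic, i.e.\ $G_c \cong C$. Its elements are $(\sigma^{ka_0}(\sigma^c\tau)^j, \alpha^k)$, and the first coordinate equals $1_M$ only when $j = k = 0$, so the stabilizer of $1_M \in M$ is trivial, giving regularity. Since $G_c \cap M = \langle \sigma^c\tau\rangle$ depends on $c$, the subgroups $G_0, \ldots, G_{p-1}$ are pairwise distinct, all isomorphic to $C$, yielding $e'(C, M, p) = p$ and $e'(M, M, p) = 0$. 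I expect the main obstacle to be the two non-abelian calculations in $\Hol(M)$ --- evaluating $(x,\alpha)^p$ and the conjugate $(\sigma^c\tau, 1)(\sigma^u,\alpha)(\sigma^c\tau, 1)^{-1}$ --- where one has to handle $\tau\sigma = \sigma^g\tau$ and $\alpha(\tau) = \sigma\tau$ simultaneously; the remaining steps reduce to routine bookkeeping.
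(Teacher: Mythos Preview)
Your argument is correct. Note, however, that the paper does not actually prove this lemma: it is quoted verbatim from \cite[Lemma~5.1]{Byott_pq}, so there is no ``paper's own proof'' to compare against. What you have written is a self-contained verification that stands on its own.

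A few small remarks on the write-up. First, the constraint $(x,\alpha)^p=1$ is automatic once you know $(x,\alpha)$ lies in the order-$p$ subgroup $P$; the real content of that step is the computation that the $\tau$-exponent of $x\,\alpha(x)\cdots\alpha^{p-1}(x)$ is $pv$, which then forces $v=0$ because $\gcd(p,q)=1$. Second, in the normality step your phrasing ``matching $\alpha$-components forces $(g-1)u\equiv 1$'' is slightly compressed: matching $\alpha$-components gives $k=1$, and it is then the $\sigma$-component that yields $gu-1\equiv u\pmod p$. Third, you never invoke regularity in the forward direction, only $|G|=pq$; this is fine, since it shows in passing that \emph{every} subgroup of $\Hol(M)$ of order $pq$ with $|\pi_2(G)|=p$ is automatically regular. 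Finally, the commutation check $(\sigma^{a_0},\alpha)(\sigma^c\tau,1)=(\sigma^c\tau,1)(\sigma^{a_0},\alpha)$ reduces exactly to the defining identity $(g-1)a_0\equiv 1\pmod p$, which is a nice structural confirmation that $a_0$ is the only value of $u$ that can work.
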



	

	


\begin{theorem}
	 There exists a unique skew brace of size $pq$ with additive group $M$ and $|\ker{\lambda}|=q$. Such skew brace is $(A,+,\circ)$ where
	\begin{eqnarray}
    \begin{pmatrix} n \\ m \end{pmatrix} + \begin{pmatrix} s \\ t \end{pmatrix} =\begin{pmatrix} n+g^{m}s \\ m+t \end{pmatrix},\qquad 
\begin{pmatrix} n \\ m \end{pmatrix} \circ \begin{pmatrix} s \\ t \end{pmatrix} =\begin{pmatrix} g^t n+g^{m}s \\ m+t \label{formula for q-case}\end{pmatrix}
	\end{eqnarray}
		for every $0\leq n,s\leq p-1,\, 0\leq m,t\leq q-1$. \end{theorem}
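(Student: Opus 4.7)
The plan is to combine Byott's description of regular subgroups with an orbit computation in $\Hol(M)$, and then read off the operation using Theorem \ref{thm:skew_holomorph}. By Remark \ref{remark for lambdas}, the condition $|\ker{\lambda}|=q$ is equivalent to $|\pi_2(G)|=p$ for the associated regular subgroup $G$; Lemma \ref{lem:ker=q} supplies exactly $p$ such candidates, namely $G_c=\langle\sigma^{a_0}\alpha,\sigma^c\tau\rangle$ with $0\leq c\leq p-1$, each isomorphic to $C$.

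First I would prove that these $p$ subgroups form a single $\Aut(M)$-orbit, so that they all yield the same skew brace up to isomorphism. A direct computation in $\Aut(M)$, using the composition rule $\varphi_{i,j}\varphi_{i',j'}=\varphi_{ii',\,ij'+j}$ coming from Lemma \ref{autom}, gives $\varphi_{i,j}\,\alpha\,\varphi_{i,j}^{-1}=\varphi_{1,i}$. Writing $x=(\sigma^{a_0},\alpha)$ and $y=(\sigma^c\tau,1)$ for the two generators of $G_c$ inside $\Hol(M)$, one checks that $x^k=(\sigma^{ka_0},\varphi_{1,k})$ has order $p$ and that $x$ and $y$ commute (here the defining relation $(g-1)a_0\equiv 1\pmod{p}$ is precisely what forces $xy=yx$). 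Conjugation by $\varphi_{i,j}$ therefore sends $x$ to $x^i$, which still generates the order-$p$ cyclic subgroup of $G_c$, and sends $y$ to $(\sigma^{ic+j}\tau,1)$. Hence
\[
\varphi_{i,j}\,G_c\,\varphi_{i,j}^{-1}=G_{ic+j},
\]
and taking $i=1$, $j=c'-c$ moves $G_c$ to $G_{c'}$ for any $c,c'$; uniqueness of the skew brace follows.

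To exhibit the operation I would work with the representative $G_0$. Since $x$ and $y$ commute, the elements of $G_0$ are exactly $x^k y^l$ with $0\leq k\leq p-1$, $0\leq l\leq q-1$. Using $\varphi_{1,k}(\tau^l)=(\sigma^k\tau)^l=\sigma^{ka_0(g^l-1)}\tau^l$ and multiplying in $\Hol(M)$ yields
\[
x^k y^l=\bigl(\sigma^{ka_0 g^l}\tau^l,\,\varphi_{1,k}\bigr).
\]
Given an arbitrary element $\sigma^n\tau^m$ of $M$, matching first coordinates forces $l=m$ and $ka_0 g^m\equiv n\pmod{p}$, so Theorem \ref{thm:skew_holomorph} identifies $\lambda_{(n,m)}$ with $\varphi_{1,k}$. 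Expanding $(n,m)\circ(s,t)=(n,m)+\varphi_{1,k}(\sigma^s\tau^t)$ inside $M$ produces
\[
\bigl((n+g^m s)+g^m k a_0(g^t-1),\,m+t\bigr),
\]
and substituting $g^m k a_0\equiv n\pmod{p}$ collapses this to $(g^t n+g^m s,\,m+t)$, which is the claimed formula. The main obstacle is simply keeping track of the non-abelian multiplication in $M$ throughout the conjugation and expansion steps; carrying pairs $(\text{element of }M,\text{automorphism})$ in $\Hol(M)$ and applying the composition rules of Lemma \ref{autom} systematically avoids accidental errors, after which the result is a matter of bookkeeping together with the congruence defining $a_0$.
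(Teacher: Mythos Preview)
Your proof is correct and follows essentially the same approach as the paper. The orbit computation is identical in spirit: the paper conjugates $G_c$ to $G_0$ by the specific element $\varphi_{1,-c}$, while you compute the general formula $\varphi_{i,j}G_c\varphi_{i,j}^{-1}=G_{ic+j}$; for the explicit operation, the paper simply asserts that the displayed formula is straightforwardly verified to be a skew brace with $|\ker\lambda|=q$, whereas you derive that formula directly from $G_0$ via Theorem \ref{thm:skew_holomorph}, which is the more transparent route (and parallels what the paper itself does in the later theorems for $|\ker\lambda|=p$ and $\ker\lambda=0$).
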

\begin{proof}
	We first show that all the groups $G_c$ as in \eqref{reg groups ker=p} are conjugate to $G_0$ by an element of $\aut{M}$. 
Let $0\leq c\leq p-1$ and set $\eta=\varphi_{1,-c}$, so $\eta^{-1}=\varphi_{1,c}$. Indeed, since $\eta$ centralizes $\sigma^{a_0}\alpha$, we have that
		\begin{eqnarray*}
		\eta G_c\eta^{-1} 
			= \langle \eta(\sigma^{a_0})\eta\alpha\eta^{-1},\eta(\sigma^c\tau) \rangle 		= \langle \sigma^{a_0}\alpha,\tau \rangle=G_0.
		\end{eqnarray*}
So there exists a unique skew brace with the desired properties.
It is straightforward to verify that $(A,+,\circ)$ as defined in $\eqref{formula for q-case}$ is a skew brace and $|\ker{\lambda}|=q$. 
\end{proof}

\begin{lemma}\label{lem:ker=p}\cite[Lemma 5.2]{Byott_pq} The regular subgroups of $\Hol(M)$ such that the image under $\pi_2$ has size $q$ are 
\begin{equation}\label{reg groups ker=q}
G_{a,b}=\langle \sigma, \tau^a\alpha^b\beta \rangle,
	\end{equation}
		where $1\leq a\leq q-1$ and $0\leq b\leq p-1$. In particular, $e^\prime(M,M,q)=p(q-2)$ and $e^\prime(C,M,q)=p$.
\end{lemma}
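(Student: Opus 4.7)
The plan is to classify the regular subgroups $G\le\Hol(M)$ with $|\pi_2(G)|=q$ in three stages: identify the possible images $\pi_2(G)\le\Aut(M)$; put a generator into a canonical form using that $\langle\sigma\rangle$ is the unique Sylow $p$-subgroup of $M$; and then check regularity and distinctness while reading off the isomorphism type.

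For the first stage I would use Lemma~\ref{autom} and the composition rule $\varphi_{i,j}\varphi_{k,l}=\varphi_{ik,il+j}$ to show that $\varphi_{i,j}$ has order $q$ iff $i$ is a primitive $q$-th root of $1$ modulo $p$, with $j$ unconstrained (since $\sum_{k=0}^{q-1}i^k$ vanishes automatically). Grouping these elements into cyclic subgroups yields exactly the $p$ subgroups $\langle\alpha^b\beta\rangle=\langle\varphi_{g,b}\rangle$, $0\le b\le p-1$, each distinguished by the value of $b$ recovered as the second coordinate of its unique element with first coordinate $g$. Hence $\pi_2(G)=\langle\alpha^b\beta\rangle$ for some $b$. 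Since $G$ is regular we have $|G|=pq$, so the kernel of $\pi_2|_G$ has order $p$; identifying this kernel with a subgroup of $M$ and invoking uniqueness of the Sylow $p$-subgroup of $M$, we find $\sigma\in G$. Any lift of $\alpha^b\beta$ to $G$ can then be corrected by a power of $\sigma$ to an element of the form $(\tau^a,\alpha^b\beta)\in G$ with $0\le a\le q-1$, and regularity forbids $a=0$ because $(1,\alpha^b\beta)$ would fix the identity of $M$. This exhibits $G$ as $G_{a,b}=\langle\sigma,\tau^a\alpha^b\beta\rangle$ with $1\le a\le q-1$ and $0\le b\le p-1$.

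To close the argument I would set $x=(\sigma,\id)$ and $y=(\tau^a,\alpha^b\beta)$ and verify that every such $G_{a,b}$ is regular: an induction using $\tau\sigma=\sigma^g\tau$ shows $y^j=(\sigma^{c_j}\tau^{ja},(\alpha^b\beta)^j)$ for certain $c_j\in\mathbb{Z}_p$, so $x^iy^j$ fixes $1\in M$ only when $ja\equiv 0\pmod q$ (forcing $j=0$ since $\gcd(a,q)=1$) and $i=0$; this simultaneously gives $|G_{a,b}|=pq$ and freeness of the action. Distinctness is then immediate: $\pi_2(G_{a,b})$ determines $b$, and for fixed $b$ the parameter $a$ is the $\tau$-exponent of the unique element of $G_{a,b}$ covering $\alpha^b\beta$ with trivial $\sigma$-component. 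For the isomorphism type, a direct conjugation in the holomorph gives $yxy^{-1}=(\tau^a\sigma^g\tau^{-a},\id)=x^{g^{a+1}}$, so $G_{a,b}$ is abelian (hence cyclic of order $pq$, i.e.\ $\cong C$) exactly when $g^{a+1}\equiv 1\pmod p$, equivalently $a=q-1$; this accounts for $p$ cyclic subgroups, while the remaining $(q-2)p$ are non-abelian of order $pq$ and therefore $\cong M$. The main technical wrinkle is the stabilizer computation, but the explicit recurrence for $c_j$ is not actually required: tracking only the $\tau$-component $d_j=ja\pmod q$ suffices to eliminate nontrivial stabilizers.
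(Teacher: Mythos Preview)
Your proof is correct. The paper does not actually prove this lemma---it is quoted verbatim from Byott's classification \cite[Lemma~5.2]{Byott_pq} with no argument supplied---so there is nothing to compare against in the present text. Your three-stage argument (classify the order-$q$ subgroups of $\Aut(M)$ via the composition law for the $\varphi_{i,j}$, use the unique Sylow $p$-subgroup of $M$ to force $\sigma\in G$ and normalize the second generator, then verify regularity and read off the isomorphism type from the conjugation relation $yxy^{-1}=x^{g^{a+1}}$) is exactly the kind of direct computation one would expect, and all the details check out. In particular, the key points---that $\sum_{k=0}^{q-1} i^k\equiv 0$ automatically when $i$ has order $q$, that the $\tau$-exponent alone rules out nontrivial stabilizers, and that abelianness occurs precisely when $a=q-1$---are handled correctly.
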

	

\begin{proposition}\label{enum_ker=p}
	There are $q-1$ skew braces with additive group $M$ and $|\ker{\lambda}|=p$. 
\end{proposition}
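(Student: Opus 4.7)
By Theorem~\ref{thm:skew_holomorph} and Remark~\ref{remark for lambdas}, the isomorphism classes of skew braces $(A,+,\circ)$ with $(A,+) \cong M$ and $|\ker{\lambda}| = p$ correspond bijectively to the $\aut{M}$-orbits, under conjugation in $\Hol(M)$, of regular subgroups $G \leq \Hol(M)$ with $|\pi_2(G)| = q$. By Lemma~\ref{lem:ker=p}, these regular subgroups are precisely the $p(q-1)$ subgroups $G_{a,b} = \langle \sigma,\, \tau^a\alpha^b\beta\rangle$ with $1 \leq a \leq q-1$ and $0 \leq b \leq p-1$. The plan is to compute the conjugation action explicitly and show that each $\aut{M}$-orbit consists of the $p$ subgroups $\{G_{a,b} : 0 \leq b \leq p-1\}$ for a single fixed $a$, producing $q-1$ orbits.

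Since $\varphi_{i,j}(\sigma) = \sigma^i$ still generates $\langle\sigma\rangle$, the conjugate $\varphi_{i,j}\,G_{a,b}\,\varphi_{i,j}^{-1}$ is determined by the image of the second generator. From $\tau\sigma = \sigma^g\tau$ and \eqref{eq:geometric} one obtains $\varphi_{i,j}(\tau^a) = (\sigma^j\tau)^a = \sigma^{ja_0(g^a-1)}\tau^a$, and from the composition rule $\varphi_{i_1,j_1}\varphi_{i_2,j_2} = \varphi_{i_1i_2,\,i_1j_2+j_1}$ (immediate from Lemma~\ref{autom}) one computes $\varphi_{i,j}\alpha\varphi_{i,j}^{-1} = \alpha^i$ and $\varphi_{i,j}\beta\varphi_{i,j}^{-1} = \alpha^{j(1-g)}\beta$. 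Combining these,
\[
\varphi_{i,j}\,(\tau^a\alpha^b\beta)\,\varphi_{i,j}^{-1} = \sigma^{ja_0(g^a-1)}\tau^a\alpha^{ib+j(1-g)}\beta,
\]
so $\varphi_{i,j}\,G_{a,b}\,\varphi_{i,j}^{-1} = \langle\sigma,\, \tau^a\alpha^{ib+j(1-g)}\beta\rangle = G_{a,\,b'}$ with $b' \equiv ib + j(1-g)\pmod p$, after absorbing the leading $\sigma$-power into $\langle\sigma\rangle$.

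The parameter $a$ is preserved because inside any $G_{a'',b''}$ the elements whose $\pi_2$-image is of the form $\alpha^c\beta$ (a single factor of $\beta$) are exactly $\sigma^k\tau^{a''}\alpha^{b''}\beta$ for $0 \leq k \leq p-1$, all with $\tau$-component $\tau^{a''}$, so matching forces $a'' = a$. For fixed $a \in \{1,\dots,q-1\}$, the map $(i,j) \mapsto ib + j(1-g)\pmod p$ is surjective onto $\mathbb{Z}_p$ because $1 - g$ is a unit modulo $p$ (take $i = 1$ and $j \equiv (b'-b)(1-g)^{-1}\pmod p$), so the $p$ subgroups $G_{a,0},\dots,G_{a,p-1}$ form a single $\aut{M}$-orbit. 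This yields exactly $q - 1$ orbits and hence $q - 1$ skew braces; as a consistency check, orbit--stabilizer gives $p(q-1)/p = q-1$. The only mildly delicate step is the conjugation formula for the second generator; the rest is routine.
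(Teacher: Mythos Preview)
Your proof is correct and follows essentially the same approach as the paper: both compute the conjugation action of $\aut{M}$ on the family $\{G_{a,b}\}$ explicitly, showing that the parameter $a$ is invariant while the action is transitive on $b$. The only difference is organizational: you derive a single general formula $\varphi_{i,j}G_{a,b}\varphi_{i,j}^{-1}=G_{a,\,ib+j(1-g)}$ and read off both conclusions from it, whereas the paper first exhibits a specific conjugator $\varphi_{1,-ba_0}$ sending $G_{a,0}$ to $G_{a,b}$ and then argues separately that the $G_{a,0}$ lie in distinct orbits.
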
	
	
	\begin{proof}
		We show that $G_{a,0}$ for $1\leq a\leq q-1$ as defined in \eqref{reg groups ker=q} form a set of representatives of the orbits.	If $b\neq 0$,
	\begin{equation*}
		\varphi_{1,-ba_0}G_{a,0}\varphi_{1,-ba_0}^{-1}=	
		\langle \sigma, (\sigma^{-ba_0}\tau)^a\alpha^b\beta \rangle = \langle \sigma, \tau^a\alpha^b\beta \rangle = G_{a,b},
	\end{equation*}
where we multiplied the second generator by a suitable power of $\sigma$ to obtain the second equality. Also, we used the identities	$\varphi_{i,j}\alpha\varphi_{i,j}^{-1}=\alpha^i$ and 	$\varphi_{i,j}\beta\varphi_{i,j}^{-1}=\varphi_{g,(1-g)j}=\alpha^{(1-g)j}\beta$.
			
	
	
The subgroup generated by $\sigma$ is invariant under the action by conjugation of $\aut{M}$. So, $G_{a,0}$ and $G_{b,0}$ are conjugate if and only if 
\begin{equation}\label{element to check}
\varphi_{i,j} \tau^a \beta \varphi_{i,j}^{-1}=\varphi_{i,j}(\tau)^a\varphi_{g,(1-g)j}=\sigma^{j\frac{g^a-1}{g-1}}\tau^a\varphi_{g,(1-g)j}\in G_{b,0}\end{equation}
for some $\varphi_{i,j}\in\aut{M}$. Then $\tau^a\varphi_{g,(1-g)j}\in G_{b,0}$ since $\sigma\in G_{b,0}$ and $\pi_2(\tau^a\varphi_{g,(1-g)j})=\varphi_{g,(1-g)j}=\beta^k$ for some $k\in \mathbb{N}$. Hence $j=0$, $k=1$ and so $\tau^a\beta\in G_{b,0}$. Given that $\tau^b\beta\in G_{b,0}$, we have $\tau^{a-b}\in\langle\sigma\rangle$ and so $a=b$.
	\end{proof}

	\begin{theorem}\label{ker lambda =p}
The skew braces of size $pq$ with additive group $M$ and $|\ker{\lambda}|=p$ are $A_\gamma=(A,+,\circ)$ for $1<\gamma\leq q$ where 
\begin{eqnarray}
  %
    \begin{pmatrix} n \\ m \end{pmatrix} + \begin{pmatrix} s \\ t \end{pmatrix} =\begin{pmatrix} n+g^{m}s \\ m+t \end{pmatrix},\qquad 
\begin{pmatrix} n \\ m \end{pmatrix} \circ \begin{pmatrix} s \\ t \end{pmatrix} =\begin{pmatrix} n+\left(g^{\gamma}\right)^m s \\ m+t \end{pmatrix} ,   \label{formula circ ker=p}
\end{eqnarray}
for every $0\leq n,s\leq p-1,\, 0\leq m,t\leq q-1$. In particular, they are bi-skew braces. 
 	\end{theorem}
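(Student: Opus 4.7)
The plan is to exhibit $q-1$ explicit candidates as bi-skew braces via Corollary \ref{biskew on cyclic}, verify they have the required properties, and then invoke Proposition \ref{enum_ker=p} (which already counts exactly $q-1$ isomorphism classes) to conclude the list is complete.

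For each $1 < \gamma \leq q$, I would apply Corollary \ref{biskew on cyclic} with $A = \mathbb{Z}_p$, $B = \mathbb{Z}_q$, and the homomorphisms $\eta_m(s) = g^m s$ and $\rho_m(s) = g^{\gamma m} s$. Since $\Aut(\mathbb{Z}_p)$ is abelian, the commutativity hypothesis is automatic, and the corollary immediately produces a bi-skew brace $A_\gamma = (A, +, \circ)$ whose operations, once the semidirect products are unfolded, coincide with \eqref{formula circ ker=p}. Because $g$ has order $q$ modulo $p$, the additive group is $\mathbb{Z}_p \rtimes_g \mathbb{Z}_q = M$, as required. This handles existence, the formula, and the bi-skew assertion in one shot.

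Next I would compute $|\ker{\lambda}|$ directly from formula \eqref{formula for lambda}, obtaining $\lambda_{(n,m)}(s,t) = (g^{(\gamma-1)m} s, t)$. The hypothesis $1 < \gamma \leq q$ ensures $\gamma - 1 \not\equiv 0 \pmod{q}$, so this automorphism is trivial precisely when $m = 0$; hence $\ker{\lambda} = \mathbb{Z}_p \times \{0\}$, which has order $p$.

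The only delicate step, and what I expect to be the main obstacle, is showing the $A_\gamma$ are pairwise non-isomorphic as skew braces: sharing the same additive group and the same kernel size is not by itself enough. My plan is to invoke Theorem \ref{thm:skew_holomorph} and identify the regular subgroup of $\Hol(M)$ attached to $A_\gamma$. Identifying $(s,t) \mapsto (g^{(\gamma-1)m} s, t)$ with $\beta^{(\gamma-1)m} \in \Aut(M)$, that subgroup is $\langle \sigma,\, \tau \beta^{\gamma-1} \rangle$. A short computation with $(\tau^a \beta)^k = \tau^{ka} \beta^k$ in $\Hol(M)$ rewrites this as $\langle \sigma,\, \tau^{(\gamma-1)^{-1}} \beta \rangle = G_{(\gamma-1)^{-1},\, 0}$, the representative from Proposition \ref{enum_ker=p}. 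Since $\gamma \mapsto (\gamma-1)^{-1} \pmod{q}$ is a bijection from $\{2, \ldots, q\}$ onto $\{1, \ldots, q-1\}$, the skew braces $A_\gamma$ realize the $q-1$ distinct $\Aut(M)$-orbits and therefore exhaust all skew braces of the required type.
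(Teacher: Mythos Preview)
Your proof is correct. The ingredients are the same as the paper's---Corollary \ref{biskew on cyclic}, Proposition \ref{enum_ker=p}, and the correspondence of Theorem \ref{thm:skew_holomorph}---but you traverse them in the opposite direction. The paper starts from the orbit representatives $G_{a,0}$ established in Proposition \ref{enum_ker=p}, computes $\lambda$ via Remark \ref{remark for lambdas}, and from that derives the $\circ$-formula with parameter $\gamma = (a+1)/a$; the bi-skew claim is then read off from Corollary \ref{biskew on cyclic} a posteriori. You instead build the candidates $A_\gamma$ directly from Corollary \ref{biskew on cyclic}, check $|\ker\lambda|=p$ from \eqref{formula for lambda}, and then match each $A_\gamma$ back to the representative $G_{(\gamma-1)^{-1},0}$ to prove distinctness. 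Your parameter identification is consistent with the paper's: $\gamma = (a+1)/a$ gives $a = (\gamma-1)^{-1}$. The advantage of your route is that the bi-skew property and the explicit formula come for free at the outset, with the only real work being the distinctness check; the paper's route has the advantage that completeness of the list is automatic once one starts from the orbit representatives, with the work instead going into extracting the formula.
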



\begin{proof} 
Let $G=G_{a,0}$ for $1\leq a\leq q-1$ be a regular subgroup of $\Hol(M)$ as in \eqref{reg groups ker=q}. According to Remark \ref{remark for lambdas}, $\lambda_{\sigma}=\pi_2(\sigma)=1$ and $\lambda_{\tau^a}=\pi_2(\tau^a\beta)=\beta$. As $a\circ b=a+\lambda_a(b)$, we have that 
$$\underbrace{\tau^a\circ \tau^a\circ \ldots \circ \tau^a}_n=\tau^{na},\qquad \underbrace{\sigma\circ \sigma\circ \ldots \circ \sigma}_n=\sigma^{n}$$ for every $n\in \mathbb{N}$. Since $\lambda:(A,\circ)\longrightarrow \aut{A,+}$ is a group homomomorphism, then $$\lambda_{\sigma^n\circ \tau^{ma}}=\lambda_{\sigma^n\tau^{ma}}=\lambda_{\sigma}^n\lambda_{\tau^a}^m=\beta^{m}$$
and therefore $\lambda_{\sigma^n\tau^m}=\beta^{\frac{m}{a}}$ for every $0\leq n\leq p-1$ and $0\leq m\leq q-1$.
Denoting the operation of $M$ additively and using the identification $\sigma\mapsto \begin{pmatrix} 1\\ 0\end{pmatrix}$ and $\tau\mapsto \begin{pmatrix} 0\\ 1\end{pmatrix}$ we have:
\[
\begin{pmatrix} n \\ m \end{pmatrix} \circ \begin{pmatrix} s \\ t \end{pmatrix} =\begin{pmatrix} n \\ m \end{pmatrix} +\beta^{\frac{m}{a}}\begin{pmatrix} s \\ t \end{pmatrix} =\begin{pmatrix} n+g^{m\frac{a+1}{a}}s \\ m+t \end{pmatrix}
\]
for every $0\leq n,s\leq p-1$ and $0\leq m,t\leq q-1$. Since $1\leq a\leq q-1$ then $1< \frac{a+1}{a}\leq q$. Both the additive and the multiplicative group of $A_\gamma$ are semidirect products of cyclic groups, so according to Corollary \ref{biskew on cyclic}, they are bi-skew braces.
\end{proof}

\begin{lemma}\label{lem:ker=1} \cite[Lemma 5.4]{Byott_pq}
The regular subgroups of $\Hol(M)$ such that the image under $\pi_2$ has size $pq$ are 
\begin{equation}\label{reg groups ker=1}
G_{c,d}=\langle \sigma^{a_0}\alpha, \sigma^c\tau^d\beta \rangle
\end{equation}
	where either $d\neq q-1$ or $c=0$, provided $d\neq 0$. In particular, $e^\prime(M,M,pq)=p(q-2)+1$.
	\end{lemma}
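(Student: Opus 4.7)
The setup is: a regular subgroup $G\leq \Hol(M)$ has $|G|=pq$, so if $|\pi_2(G)|=pq$ as well, then $\pi_2|_G$ is a group isomorphism onto the unique subgroup of order $pq$ in $\Aut(M)$, namely $\langle\alpha,\beta\rangle\cong M$. In particular $G\cong M$ (which also shows $e^\prime(C,M,pq)=0$), and for each element of $\langle\alpha,\beta\rangle$ there is a unique lift in $G$. So $G$ is generated by the unique pair $x=(u,\alpha)$, $y=(v,\beta)$ satisfying the defining relations $x^p=y^q=1$ and $yxy^{-1}=x^g$ of $M$, and the proof reduces to reading off the admissible $u,v\in M$ from these three identities together with the regularity requirement that $\pi_1|_G\colon G\to M$ be bijective. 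The main technical step is the computation of powers and commutators inside the semidirect product $\Hol(M)$, for which I would use the identity $\tau^r\sigma^s=\sigma^{sg^r}\tau^r$ in $M$ repeatedly.

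For the order of $x$: write $u=\sigma^a\tau^b$ and reduce $\prod_{i=0}^{p-1}\alpha^i(u)$ modulo the normal subgroup $\langle\sigma\rangle$, on which $\alpha$ acts trivially. The image in $M/\langle\sigma\rangle\cong\Z_q$ is $\tau^{pb}$, trivial iff $b=0$; then $x^p=(\sigma^{pa},1)=1$ automatically, so $u=\sigma^a$ for some $a\in\Z_p$. For the order of $y$: write $v=\sigma^c\tau^d$ and prove by induction that
$y^k=\bigl(\sigma^{c\sum_{i=0}^{k-1}g^{i(d+1)}}\tau^{kd},\,\beta^k\bigr)$.
Setting $k=q$, the condition $y^q=1$ becomes $c\sum_{i=0}^{q-1}g^{i(d+1)}\equiv 0\pmod p$; by \eqref{eq:geometric} this sum vanishes modulo $p$ whenever $d+1\not\equiv 0\pmod q$, and equals $q\not\equiv 0\pmod p$ when $d\equiv q-1$. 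This yields exactly the constraint: either $d\neq q-1$, or $c=0$.

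The third relation $yxy^{-1}=x^g$ is the heart of the argument. Using the product rule $\varphi_{i,j}\varphi_{i',j'}=\varphi_{ii',ij'+j}$ for $\Aut(M)$, one verifies $\beta\alpha\beta^{-1}=\alpha^g$, so the second coordinates of $yxy^{-1}$ and $x^g$ already match. A direct expansion gives the first coordinate $v\,\beta(u)\,\alpha^g(v^{-1})=\sigma^{ag^{d+1}-ga_0(g^d-1)}$, while $\pi_1(x^g)=\sigma^{ga}$; after cancelling a factor of $g$ the identity reduces to $a(g^d-1)\equiv a_0(g^d-1)\pmod p$. Regularity now rules out $d=0$: otherwise both $u,v\in\langle\sigma\rangle$, so $\pi_1(G)\subseteq\langle\sigma\rangle\subsetneq M$, contradicting that $\pi_1|_G$ is a bijection. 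With $d\neq 0$ the factor $g^d-1$ is invertible modulo $p$, forcing $a=a_0$, so $G=G_{c,d}=\langle\sigma^{a_0}\alpha,\sigma^c\tau^d\beta\rangle$.

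Finally one must check that every admissible $(c,d)$ really gives a regular subgroup and count. Since $|G_{c,d}|=pq=|M|$, regularity reduces to surjectivity of $\pi_1|_{G_{c,d}}$; writing a general element as $x^iy^j$, the first coordinate derived above has $\tau$-exponent $jd$, which covers $\Z_q$ because $\gcd(d,q)=1$, and for each fixed $j$ has $\sigma$-exponent linear in $i$ with invertible coefficient $a_0 g^{jd}$, covering $\Z_p$. Distinct parameters give distinct subgroups because $(c,d)$ is recovered from the unique $\pi_2$-preimage of $\beta$ in $G_{c,d}$, namely $\sigma^c\tau^d\beta$. Summing yields $1$ subgroup for $d=q-1$ (with $c=0$) and $p$ subgroups for each of $d\in\{1,\ldots,q-2\}$, totalling $e^\prime(M,M,pq)=p(q-2)+1$.
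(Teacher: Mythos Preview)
Your proof is correct. The paper itself does not prove this lemma; it merely quotes it from \cite[Lemma~5.4]{Byott_pq}, so there is nothing in the paper to compare against beyond the bare statement. What you have written is a self-contained reconstruction of Byott's argument: you exploit that $\pi_2|_G$ is an isomorphism onto the unique order-$pq$ subgroup $\langle\alpha,\beta\rangle\cong M$ of $\Aut(M)$, lift the generators uniquely to $x=(u,\alpha)$, $y=(v,\beta)$, and then read off the admissible $u,v$ from the relations $x^p=y^q=1$, $yxy^{-1}=x^g$ together with regularity. All the computations check out (in particular the key identity $v\,\beta(u)\,\alpha^g(v^{-1})=\sigma^{ag^{d+1}-ga_0(g^d-1)}$ and its consequence $a=a_0$ once $d\neq 0$), the regularity verification via the explicit $\tau$- and $\sigma$-exponents of $\pi_1(x^iy^j)$ is correct, and the injectivity of $(c,d)\mapsto G_{c,d}$ via the unique $\pi_2$-lift of $\beta$ is the right way to see that distinct parameters give distinct subgroups. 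The count $p(q-2)+1$ follows immediately.
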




\begin{proposition}
	There exist $q-1$ skew braces with additive group $M$ and $\ker{\lambda}=0$.
	
\end{proposition}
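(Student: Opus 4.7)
The plan is to mirror the strategy already employed in the proof of Proposition~\ref{enum_ker=p}: use Lemma~\ref{lem:ker=1} to enumerate the candidate regular subgroups, exhibit a set of $q-1$ representatives, show every regular subgroup is $\Aut(M)$-conjugate to one of these, and finally prove the representatives are pairwise non-conjugate.

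By Lemma~\ref{lem:ker=1}, the regular subgroups with $|\pi_2(G)|=pq$ are the $G_{c,d}=\langle \sigma^{a_0}\alpha,\sigma^c\tau^d\beta\rangle$ with $d\in\{1,\dots,q-1\}$ subject to the constraint that if $d=q-1$ then $c=0$, so that the total count $p(q-2)+1$ agrees with $e^\prime(M,M,pq)$. I propose $\{G_{0,d}:1\le d\le q-1\}$ as the set of orbit representatives. To show every $G_{c,d}$ is conjugate to $G_{0,d}$, I would conjugate by $\varphi_{1,j}$ using the identities $\varphi_{i,j}\alpha\varphi_{i,j}^{-1}=\alpha^i$ and $\varphi_{i,j}\beta\varphi_{i,j}^{-1}=\alpha^{(1-g)j}\beta$ recorded earlier. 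A direct computation (together with the identity $(\sigma^j\tau)^d=\sigma^{ja_0(g^d-1)}\tau^d$) yields a new second generator of the form $\sigma^{c+ja_0(g^d-1)}\tau^d\,\alpha^{(1-g)j}\beta$; multiplying by a suitable power of $\sigma^{a_0}\alpha$ brings the $\Aut(M)$-part back to $\beta$ at the cost of altering the $\sigma$-exponent. The resulting first coordinate depends linearly on $j$ with leading term a nonzero multiple of $g^{d+1}-1$; when $d\neq q-1$ this quantity is invertible mod $p$, so $j$ can be chosen to kill the exponent, showing $G_{c,d}\sim G_{0,d}$. The case $d=q-1$ is trivial since only $c=0$ is permitted.

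For the non-conjugacy of the representatives, I would exploit the fact that every $G_{0,d}$ contains the same Sylow $p$-subgroup $P=\langle(\sigma^{a_0},\alpha)\rangle$, and that $P$ is preserved under conjugation by any $\varphi_{i,j}$ since $\varphi_{i,j}(\sigma^{a_0},\alpha)\varphi_{i,j}^{-1}=(\sigma^{ia_0},\alpha^i)\in P$. Consequently, to compare $\varphi_{i,j}G_{0,d}\varphi_{i,j}^{-1}$ with $G_{0,d'}$ it suffices to look at the unique $P$-coset representative whose $\Aut(M)$-component equals $\beta$. A second computation shows that the normalized conjugate of the second generator takes the shape $(\sigma^{a_0j(g^{d+1}-1)}\tau^d,\beta)$, whereas the unique $P$-coset representative with second component $\beta$ inside $G_{0,d'}$ carries $\tau^{d'}$ in its first component. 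Matching these forces $d=d'$, so the $q-1$ subgroups $G_{0,d}$ lie in distinct orbits. Combined with Theorem~\ref{thm:skew_holomorph}, this gives exactly $q-1$ isomorphism classes of skew braces with additive group $M$ and $\ker\lambda=0$, as a sanity check the orbit sizes ($q-2$ orbits of size $p$ plus one orbit of size $1$) add up to $p(q-2)+1=e^\prime(M,M,pq)$.

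The main obstacle is the bookkeeping for the twisted conjugation of $\beta$: tracking how the factor $\alpha^{(1-g)j}$ interacts with the $\sigma$-exponent under the noncommutative multiplication in $\Hol(M)$, and correctly normalizing modulo the Sylow $p$-generator $\sigma^{a_0}\alpha$. Once this normalization is carried out carefully, both the conjugacy reduction and the invariant distinguishing different values of $d$ emerge from the same calculation, with the special role of $d=q-1$ (where $g^{d+1}-1\equiv 0\pmod p$) perfectly accounting for why no $G_{c,q-1}$ with $c\neq 0$ appears in Lemma~\ref{lem:ker=1}.
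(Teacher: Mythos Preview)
Your proposal is correct and follows essentially the same approach as the paper: both compute the conjugate $\varphi_{i,j}G_{0,d}\varphi_{i,j}^{-1}$, normalize the second generator by a power of $\sigma^{a_0}\alpha$, and observe that the resulting $\sigma$-exponent is $j\,a_0(g^{d+1}-1)$, which is invertible in $j$ precisely when $d\neq q-1$. The only cosmetic difference is in the non-conjugacy step: the paper simply reads off that the orbit of $G_{0,d}$ is $\{G_{c,d}:c\}$ and invokes the distinctness of the parametrization in Lemma~\ref{lem:ker=1}, whereas you isolate the unique element of each subgroup with $\pi_2$-component $\beta$ and compare its $\tau$-exponent---but this is the same invariant packaged slightly differently.
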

	\begin{proof}
We show that the groups $G_{0,d}$ with $1\leq d\leq q-1$ as defined in \eqref{reg groups ker=1} are the representatives of the orbits of regular groups such that the image under $\pi_2$ has size $pq$.

First, we conjugate any $G_{0,d}$ by an arbitrary $\varphi_{i,j}$:

\begin{align*}
\varphi_{i,j}G_{0,d}\varphi_{i,j}^{-1} &= \langle \varphi_{i,j}(\sigma^{a_0})\alpha^i, \varphi_{i,j}(\tau^d)\varphi_{g,(1-g)j} \rangle \\
 &= \langle (\sigma^{a_0}\alpha)^i, (\sigma^j\tau)^d\varphi_{g,(1-g)j} \rangle \\
 &= \langle \sigma^{a_0}\alpha, \sigma^{j\frac{g^d-1}{g-1}}\tau^d\varphi_{g,(1-g)j} \rangle.
\end{align*}
Multiplying by $(\sigma^{a_0}\alpha)^{(g-1)j}$ the second generator, we get
\[
\langle \sigma^{a_0}\alpha, \sigma^{j\frac{g^{d+1}-1}{g-1}}\tau^d\beta \rangle=G_{j\frac{g^{d+1}-1}{g-1},d}.
\]
Now, since $\frac{g^{d+1}-1}{g-1}\equiv 0 \pmod p$ if and only if $d=q-1$, we have that the orbit of $G_{0,q-1}$ has only one element and that the orbit of $G_{0,d}$ with $d\neq q-1$ has $p$ elements as $j\frac{g^{d+1}-1}{g-1}$ runs from $0$ to $p$.

\end{proof}

\begin{theorem} 
The skew braces of size $pq$ with additive group $M$ and $\ker{\lambda}=0$ are $A_\mu=(A,+,\circ)$ for $1< \mu\leq q$ where
\begin{eqnarray}
\begin{pmatrix} n \\ m \end{pmatrix} + \begin{pmatrix} s \\ t \end{pmatrix} =\begin{pmatrix} n+g^{m}s \\ m+t \end{pmatrix},\qquad 
\begin{pmatrix} n \\ m \end{pmatrix} \circ \begin{pmatrix} s \\ t \end{pmatrix} =\begin{pmatrix} g^t n+\left(g^\mu\right)^{m}s \\ m+t \end{pmatrix}\label{circle for ker=0}
\end{eqnarray}
for every $0\leq n,s\leq p-1,\, 0\leq m,t\leq q-1$.
	\end{theorem}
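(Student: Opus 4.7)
The plan is to apply Theorem \ref{thm:skew_holomorph} to each representative $G_{0,d} = \langle \sigma^{a_0}\alpha,\,\tau^d\beta\rangle$ with $1\leq d\leq q-1$ furnished by the preceding proposition and read off the resulting operation $\circ$ on $A=M$. Since both generators of $G_{0,d}$ have non-trivial images under $\pi_2$, we cannot obtain $\lambda$ on $\sigma$ and $\tau$ directly from the generators as in the proof of Theorem \ref{ker lambda =p}; instead we must explicitly parametrize $G_{0,d}$ and invert $\pi_1|_{G_{0,d}}\colon G_{0,d}\to M$.

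First I would compute products in $G_{0,d}$. Setting $x=(\sigma^{a_0},\alpha)$ and $y=(\tau^d,\beta)$, the relations $\alpha(\sigma)=\sigma$, $\alpha^i(\tau)=\sigma^i\tau$, $\beta(\tau)=\tau$, together with the identity $(\sigma^k\tau)^t=\sigma^{ka_0(g^t-1)}\tau^t$ (a consequence of $\tau\sigma=\sigma^g\tau$ and \eqref{eq:geometric}), yield by induction $x^iy^j=(\sigma^{ia_0g^{jd}}\tau^{jd},\,\alpha^i\beta^j)$. For $a=\sigma^n\tau^m$ I would then solve the congruences $jd\equiv m\pmod q$ and $ia_0 g^{jd}\equiv n\pmod p$, obtaining $j=md^{-1}$ and, via \eqref{eq:a_0}, $i=n(g-1)g^{-m}$. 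Hence $\lambda_a=\alpha^{n(g-1)g^{-m}}\beta^{md^{-1}}$.

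Next I would evaluate this $\lambda_a$ on a general $b=\sigma^s\tau^t$. From $\alpha^k\beta^\ell(\sigma)=\sigma^{g^\ell}$ and $\alpha^k\beta^\ell(\tau)=\sigma^k\tau$, the same geometric sum gives $\alpha^k\beta^\ell(\sigma^s\tau^t)=\sigma^{g^\ell s+ka_0(g^t-1)}\tau^t$. Substituting $k=n(g-1)g^{-m}$, so that $ka_0\equiv ng^{-m}\pmod p$, and expanding $a\circ b=a+\lambda_a(b)$ in the additive notation $(n,m)+(n',m')=(n+g^mn',\,m+m')$ for $M$, I obtain
\[
a\circ b=\bigl(n+g^m[g^\ell s+ng^{-m}(g^t-1)],\,m+t\bigr)=\bigl(g^tn+g^{m+\ell}s,\,m+t\bigr),
\]
where $\ell=md^{-1}$. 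Setting $\mu\equiv 1+d^{-1}\pmod q$ turns $m+\ell$ into $m\mu$, giving precisely \eqref{circle for ker=0}.

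Finally, since $d\mapsto d^{-1}$ is a bijection on $\{1,\ldots,q-1\}$, the parameter $\mu=1+d^{-1}$ ranges over $\{2,3,\ldots,q\}$ (with $\mu=q$, corresponding to $d=q-1$, giving $g^\mu=1$), matching the $q-1$ orbits counted in the preceding proposition. The main obstacle throughout is the non-abelian bookkeeping: both $M$ and the subgroup $\langle\alpha,\beta\rangle\leq\Aut(M)$ are non-abelian (with $\beta\alpha=\alpha^g\beta$), so every product must be normalized with care, and the identity $(g-1)a_0\equiv 1\pmod p$ must be invoked repeatedly to tidy up exponents modulo $p$.
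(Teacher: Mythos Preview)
Your argument is correct and follows the same route as the paper: start from the representatives $G_{0,d}$, compute $\lambda_{\sigma^n\tau^m}=\alpha^{n(g-1)g^{-m}}\beta^{md^{-1}}$, expand $a\circ b=a+\lambda_a(b)$, and reparametrize via $\mu\equiv 1+d^{-1}\pmod q$. The only organizational difference is that the paper, rather than parametrizing $G_{0,d}$ and inverting $\pi_1$ directly, reads $\lambda_{\sigma^{a_0}}=\alpha$ and $\lambda_{\tau^d}=\beta$ off the generators, verifies the $\circ$-power identities $\sigma^{a_0}\circ\cdots\circ\sigma^{a_0}=\sigma^{na_0}$ and $\tau^{d}\circ\cdots\circ\tau^{d}=\tau^{nd}$, and then uses that $\lambda$ is an $(A,\circ)$-homomorphism to deduce $\lambda_\sigma=\alpha^{g-1}$ and $\lambda_\tau=\beta^{d^{-1}}$ --- so your remark that one ``cannot obtain $\lambda$ on $\sigma$ and $\tau$ directly from the generators'' is a slight overstatement, though your alternative computation is equally valid.
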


\begin{proof}
Let $G_{0,d}$ be a regular subgroup of $\Hol(M)$ as in \eqref{reg groups ker=1}. By virtue of Remark \ref{remark for lambdas} we have $\lambda_{\pi_1(x)}=\pi_2(x)$
for every $x\in G_{0,d}$. In particular,
$$\lambda_{\sigma^{a_0}}=\alpha,\quad \lambda_{\tau^d}=\beta.$$ Since $a\circ b=a+\lambda_a(b)$, it is easy to check that 
$$\underbrace{\tau^d\circ \tau^d\circ \ldots \circ \tau^d}_n=\tau^{nd},\qquad \underbrace{\sigma^{a_0}\circ \sigma^{a_0}\circ \ldots \circ \sigma^{a_0}}_n=\sigma^{na_0}$$ for every $n\in \mathbb{N}$. Using that $\lambda:(A,\circ)\longrightarrow \aut{A,+}$ is a group homomorphism, then $\lambda_{\sigma}=\alpha^{g-1}$ and $\lambda_{\tau}=\beta^{d^{-1}}$. On the other hand,
\begin{align*}
\sigma^n\circ \tau^{m}&=\sigma^n \alpha^{n(g-1)}(\tau^m)\\
&=\sigma^{n+(g^{m}-1)n}\tau^{m}\\
&=\sigma^{g^{m}n}\tau^{m}
\end{align*}
and therefore
\begin{equation*}
     \lambda_{\sigma^n \tau^m}=\lambda_{\sigma^{ng^{-m}}\circ \tau^{m}}=\lambda_{\sigma^{ng^{-m}}}\lambda_{\tau^m}=\alpha^{(g-1)ng^{-m}}\beta^{md^{-1}}
\end{equation*}
for every $0\leq n\leq p-1,\, 0\leq m\leq q-1$. 
Denoting the operation of $M$ additively and using the identification $\sigma\mapsto \begin{pmatrix} 1\\ 0\end{pmatrix}$ and $\tau\mapsto \begin{pmatrix} 0\\ 1\end{pmatrix}$ we have 
\begin{eqnarray}
\begin{pmatrix} n \\ m \end{pmatrix} + \begin{pmatrix} s \\ t \end{pmatrix} =\begin{pmatrix} n+g^{m}s \\ m+t \end{pmatrix},\qquad 
\begin{pmatrix} n \\ m \end{pmatrix} \circ \begin{pmatrix} s \\ t \end{pmatrix} =\begin{pmatrix} g^t n+g^{m\frac{d+1}{d}}s \\ m+t \end{pmatrix}\end{eqnarray}
for every $0\leq n,s\leq p-1,\, 0\leq m,t\leq q-1$.
Since $1\leq d\leq q-1$ then $1< \mu=\frac{d+1}{d}\leq q$ and so we have formula
\eqref{circle for ker=0} for the $\circ$ operation.
\end{proof}

\subsection*{Acknowledgments} 
This work was partially supported by UBACyT 20020171000256BA and PICT 2016-2481. The authors thank Leandro Vendramin for drawing their attention to this problem and Kayvan Nejabati Zenouz for his comments and suggestions. The authors also want to thank the anonymous referee for their constructive comments and remarks.


\bibliographystyle{abbrv}
\bibliography{refs}

\end{document}